\newtheorem{theorem}{Theorem}[section]
\newtheorem{lemma}[theorem]{Lemma}
\newtheorem{cor}[theorem]{Corollary}
\theoremstyle{definition}
\theoremstyle{remark}
\newtheorem{remark}[theorem]{\bf{Remark}}
\numberwithin{equation}{section}
\begin{document}

\title [Numerical radius inequalities and estimation of zeros of polynomials]  {{ Numerical radius inequalities and estimation of zeros of polynomials }}
\author[S. Jana P. Bhunia and K. Paul] {Suvendu Jana$^1$, Pintu Bhunia$^2$ \MakeLowercase{and} Kallol Paul$^2$}

\address{$^1$ Department of Mathematics, Mahisadal Girls' College, Purba Medinipur 721628, West Bengal, India}
\email{janasuva8@gmail.com}

\address{$^2$ Department of Mathematics, Jadavpur University, Kolkata 700032, West Bengal, India}
\email{pintubhunia5206@gmail.com}
\email{kalloldada@gmail.com; kallol.paul@jadavpuruniversity.in}

\thanks{Pintu Bhunia would like to thank UGC, Govt. of India for the financial support in the form of SRF under the mentorship of Prof. Kallol Paul. }

\subjclass[2020]{Primary 47A12, 26C10  Secondary 47A30, 30C15}
\keywords{Numerical radius, Operator norm, Frobenius
	companion matrix, Zeros of a polynomial}

\maketitle

\begin{abstract}
	Let $A$ be a bounded linear operator defined  on a complex Hilbert space and let $|A|=(A^*A)^{1/2}$ be the positive square root of $A$.
	 Among other refinements of the well known numerical radius inequality $w^2(A)\leq \frac12 \|A^*A+AA^*\|$, 
	 we show that 
	 \begin{eqnarray*}
		w^2(A)&\leq&\frac{1}{4} w^2 \left(|A|+i|A^*|\right)+\frac{1}{8}\left\||A|^2+|A^*|^2\right \|+\frac{1}{4}w\left(|A||A^*|\right) \\
		&\leq& \frac12 \|A^*A+AA^*\|.
	\end{eqnarray*}
Also, we develop inequalities involving numerical radius and spectral radius for the sum of the product operators, from which we derive the following inequalities $$ w^p(A) \leq \frac{1}{\sqrt{2} } w(|A|^p+i|A^*|^p  )\leq \|A\|^p$$ for all $p\geq 1.$ Further, we derive new bounds for the zeros of complex polynomials.

\end{abstract}

\section{Introduction}

\noindent Let $\mathscr{H}$ be a complex Hilbert space with usual inner product $\langle \cdot,\cdot \rangle $ and the corresponding norm $\|\cdot\|$ induced by the inner product.  Let $ \mathbb{B}(\mathscr{H})$ denote the $C^*$-algebra of all bounded linear operators on  $\mathscr{H}$. For $A\in \mathbb{B}(\mathscr{H})$,  $|A|=({A^*A})^{{1}/{2}}$ is the positive square root of $A$. The numerical range of $A$, denoted as $W(A)$, is defined by $W(A)=\left \{\langle Ax,x  \rangle: x\in \mathscr{H}, \|x\|=1 \right \}.$
Let $\|A\|$, $r(A)$ and $w(A)$ denote the operator norm, the spectral radius and the numerical radius of $A$, respectively. Recall that $w(A)=\sup \left \{|\langle Ax,x  \rangle|: x\in \mathscr{H}, \|x\|=1 \right \}.$
The numerical radius $ w(\cdot)$ defines a norm on $\mathbb{B}(\mathscr{H})$, (is equivalent to the operator norm $\|\cdot\|$) is satisfying the following  inequality
\begin{eqnarray}\label{eqv}
\frac{1}{2} \|A\|\leq w({A})\leq\|A\|.
\end{eqnarray}
The first inequality becomes equality if $A^2=0$ and the second one turns into equality if $A$ is normal. 
Similar as the operator norm, numerical radius also satisfies  the power inequality:
\begin{eqnarray}
	w(A^n)\leq w^n(A) \,\, \text {for every $n=1,2,3,\ldots$}.
	\label{eqn7}\end{eqnarray}
It is well known that for $A\in\mathbb{B}(\mathscr{H})$,\begin{eqnarray}
 r(A)\leq w(A).\label{eqn1}\end{eqnarray}
  The inequality (\ref{eqn1}) is sharp. In fact, if $A$ is normal, then $r(A)=w(A)=\|A\|$. For $A,B \in \mathbb{B}(\mathscr{H})$, we have $ r(AB)=r(BA)$ and $ r(A^n)=r^n(A)$ for every positive integer $n$.  
  \smallskip
  Over the years many eminent mathematicians have studied various refinements of (\ref{eqv}) and obtained various bounds for the zeros of a complex polynomial, we refer the readers to  \cite{BhuBook,bib1, BP_RM, aab, a15, a7, SAH, SEO} and the references therein. In \cite{E}, Kittaneh improved the inequalities in (\ref{eqv}) to prove that 
\begin{eqnarray}
\frac{1}{4}\|A^*A+A{A}^*\|\leq w^2({A})\leq\frac{1}{2}\|A^*A+A{A}^*\|.
\label{d}\end{eqnarray}
In this article, we develop new refinements of the second inequality in (\ref{d}). We obtain inequalities involving numerical radius and spectral radius of the sum of the product operators, from which we achieve a nice refinement of the classical inequality $w(A)\leq \|A\|$. As application of the numerical radius inequalities, we give new bounds for the zeros of a complex monic polynomial which improve on the existing ones.

\section{Numerical radius inequalities}

We begin the section with the following lemmas.

\begin{lemma}\cite{kato}(Generalized Cauchy-Schwarz inequality)
 If $A\in\mathcal{B}(\mathscr{H})$ and $ 0\leq\alpha\leq1$, then $$ |\langle Ax,y\rangle|^2\leq\langle|A|^{2\alpha} x,x\rangle\langle|A^*|^{2(1-\alpha)}y,y\rangle$$ for all $x,y\in\mathscr{H}$. 
 
\label{lem1}\end{lemma}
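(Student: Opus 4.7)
The plan is to use the polar decomposition $A = U|A|$, where $U$ is the partial isometry with initial space $\overline{\mathrm{range}(|A|)}$, in order to split the absolute value into two pieces and then apply the ordinary Cauchy--Schwarz inequality. Since $0\leq\alpha\leq 1$ and $|A|\geq 0$, the continuous functional calculus permits the factorization $|A| = |A|^{1-\alpha}|A|^{\alpha}$. For arbitrary $x,y\in\mathscr{H}$ this would give
$$\langle Ax, y\rangle = \langle U|A|^{1-\alpha}|A|^{\alpha}x, y\rangle = \langle |A|^{\alpha}x,\, |A|^{1-\alpha}U^*y\rangle,$$
after which the classical Cauchy--Schwarz inequality yields
$$|\langle Ax,y\rangle|^{2} \leq \||A|^{\alpha}x\|^{2}\cdot \||A|^{1-\alpha}U^{*}y\|^{2} = \langle|A|^{2\alpha}x,x\rangle \cdot \langle U|A|^{2(1-\alpha)}U^{*}y,\, y\rangle.$$

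To conclude I would identify $U|A|^{2(1-\alpha)}U^{*}$ with $|A^{*}|^{2(1-\alpha)}$. The basic identity $|A^{*}|^{2} = AA^{*} = U|A|^{2}U^{*}$ combined with positivity gives $|A^{*}| = U|A|U^{*}$, and the continuous functional calculus (reducing to polynomials via $U^{*}U|A| = |A|$ and then approximating) lifts this to $Uf(|A|)U^{*} = f(|A^{*}|)$ for every continuous $f$ vanishing at the origin. Applying this with $f(t) = t^{2(1-\alpha)}$ replaces the right-hand inner product by $\langle|A^{*}|^{2(1-\alpha)}y,y\rangle$, which is exactly the desired bound.

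The main point requiring care is this last intertwining identity: for non-integer exponents $2(1-\alpha)$ one must justify the functional-calculus move beyond the quadratic case, and the endpoints $\alpha\in\{0,1\}$ (where one of the two powers degenerates to the identity on a subspace that need not be invariant under $U$) are most cleanly handled separately, since in those cases the statement reduces directly to the standard Cauchy--Schwarz inequality $|\langle Ax,y\rangle|^{2}\leq \|Ax\|^{2}\|y\|^{2}=\langle|A|^{2}x,x\rangle\langle y,y\rangle$ and its mirror with the roles of $x$ and $y$ swapped.
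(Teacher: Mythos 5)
The paper does not prove this lemma at all; it is imported verbatim from Kato's 1952 paper as a known result, so there is no internal proof to compare against. Your polar-decomposition argument is the standard proof of the mixed Schwarz inequality and it is correct: the factorization $|A|=|A|^{1-\alpha}|A|^{\alpha}$, the Cauchy--Schwarz step, and the intertwining identity $Uf(|A|)U^{*}=f(|A^{*}|)$ for continuous $f$ with $f(0)=0$ (justified via $U^{*}U|A|^{k}=|A|^{k}$ for polynomials vanishing at $0$ and then uniform approximation) are all sound. You are right that $\alpha=1$ must be excluded from the functional-calculus step, since $t\mapsto t^{0}$ does not vanish at the origin and $UU^{*}$ is only a projection; handling that endpoint by ordinary Cauchy--Schwarz, as you do, closes the gap. (The case $\alpha=0$ actually goes through the main argument unchanged, since the exponent $2(1-\alpha)=2$ is still positive, so only $\alpha=1$ genuinely needs the separate treatment.)
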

\begin{lemma}\cite{a2}(Holder-McCarthy inequality) Let $A\in\mathcal{B}(\mathscr{H})$ be  positive. Then the following inequalities hold: $$\langle A^rx,x\rangle\geq \|x\|^{2(1-r)}\langle Ax,x\rangle^r,\,\,\,\,\, \textit{when $ r\geq1$}$$ $$\langle A^rx,x\rangle\leq\|x\|^{2(1-r)}\langle Ax,x\rangle^r,\,\,\,\,\, \textit{when $ 0\leq r\leq1$}$$  for any $x\in\mathscr{H}$.
\label{lem2}\end{lemma}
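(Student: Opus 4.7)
The plan is to reduce the operator inequality to a scalar integral inequality via the spectral theorem for positive operators. First I would use positivity of $A$ to obtain a spectral resolution $A = \int_{\sigma(A)} \lambda\, dE_\lambda$ with $\sigma(A) \subset [0, \|A\|]$. Fixing $x \in \mathscr{H}$ with $x \neq 0$ and normalizing $y = x/\|x\|$, the assignment $B \mapsto \langle E(B) y, y\rangle$ defines a Borel probability measure $\mu$ on $[0,\|A\|]$, and the functional calculus gives $\langle f(A)y, y\rangle = \int f(\lambda)\, d\mu(\lambda)$ for every continuous $f$ on $\sigma(A)$. In particular, both $\langle A^r y, y\rangle$ and $\langle A y, y\rangle$ are expressible as moments of the probability measure $\mu$.

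The main step is then a direct application of Jensen's inequality to $\varphi(t) = t^r$ on $[0, \infty)$. When $r \geq 1$, $\varphi$ is convex, so $\int \lambda^r\, d\mu \geq \bigl(\int \lambda\, d\mu\bigr)^r$, which translates to $\langle A^r y, y\rangle \geq \langle A y, y\rangle^r$; when $0 \leq r \leq 1$, $\varphi$ is concave and Jensen reverses, giving $\langle A^r y, y\rangle \leq \langle A y, y\rangle^r$. This yields both halves of the lemma in the unit-vector case.

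To recover the stated form for a general $x$, I would simply rescale. Using $\langle A^r x, x\rangle = \|x\|^2 \langle A^r y, y\rangle$ and $\langle A x, x\rangle = \|x\|^2 \langle A y, y\rangle$, the unit-vector inequality gives $\langle A^r x, x\rangle \geq \|x\|^2 \|x\|^{-2r} \langle A x, x\rangle^r = \|x\|^{2(1-r)} \langle A x, x\rangle^r$ when $r \geq 1$, and the reversed inequality when $0 \leq r \leq 1$. The case $x = 0$ is trivial since both sides vanish. There is no real obstacle here: the only subtlety worth flagging is that the flip in direction at $r = 1$ reflects nothing beyond the passage from convexity to concavity of $t \mapsto t^r$, so the two displayed inequalities are simply the two regimes of Jensen's inequality, translated through the spectral measure.
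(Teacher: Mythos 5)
The paper does not prove this lemma at all --- it is quoted as a known result (the H\"older--McCarthy inequality) with a citation to McCarthy's paper, so there is no internal proof to compare against. Your argument is correct and is essentially the standard proof: the spectral resolution of the positive operator $A$ turns $\langle f(A)y,y\rangle$ into an integral against the probability measure $\mu(\cdot)=\langle E(\cdot)y,y\rangle$ for a unit vector $y$, Jensen's inequality for the convex (resp.\ concave) function $t\mapsto t^r$ gives the two regimes, and the homogeneity factor $\|x\|^{2(1-r)}$ falls out of the rescaling $y=x/\|x\|$ exactly as you describe. The only cosmetic point is the $x=0$, $r>1$ case, where $\|x\|^{2(1-r)}$ is formally infinite and the inequality should just be read with the convention that both sides vanish; this does not affect anything in the paper, where the lemma is only ever applied to unit vectors.
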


\begin{lemma}\cite{a3}(Buzano's inequality)
Let $ x,e,y\in\mathscr{H}$ with $\|e\|=1$, then $$|\langle x,e\rangle\langle e,y\rangle|\leq\frac{1}{2}\left(\|x\|\|y\|+|\langle x,y\rangle|\right).$$
\label{lem3}\end{lemma}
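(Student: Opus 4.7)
The plan is to prove Buzano's inequality via an orthogonal decomposition of $x$ and $y$ with respect to the distinguished unit vector $e$, combined with Cauchy--Schwarz on the orthogonal complement of $e$ and an AM--GM finishing argument. I would start by writing $x=\alpha e+x_0$ and $y=\beta e+y_0$, where $\alpha=\langle x,e\rangle$, $\beta=\langle y,e\rangle$, and $x_0,y_0\in\{e\}^{\perp}$. Using $\|e\|=1$ and the orthogonality, this immediately yields the identities $\langle x,e\rangle\langle e,y\rangle=\alpha\overline{\beta}$ and $\langle x,y\rangle=\alpha\overline{\beta}+\langle x_0,y_0\rangle$, together with the Pythagorean relations $\|x\|^2=|\alpha|^2+\|x_0\|^2$ and $\|y\|^2=|\beta|^2+\|y_0\|^2$.

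From the first two identities one has $\alpha\overline{\beta}=\langle x,y\rangle-\langle x_0,y_0\rangle$, so the triangle inequality together with the ordinary Cauchy--Schwarz bound $|\langle x_0,y_0\rangle|\le\|x_0\|\|y_0\|$ gives
$$|\alpha\overline{\beta}|\le|\langle x,y\rangle|+\|x_0\|\|y_0\|.$$
The whole argument then reduces to controlling $\|x_0\|\|y_0\|$, and this is where the main obstacle lies: I must sharpen the naive estimate $\|x_0\|\|y_0\|\le\sqrt{(\|x\|^2-|\alpha|^2)(\|y\|^2-|\beta|^2)}$ into the strictly better bound $\|x_0\|\|y_0\|\le\|x\|\|y\|-|\alpha\overline{\beta}|$, which is what will produce the factor of $\tfrac{1}{2}$ in the final inequality.

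To handle this key step, I would first observe that $|\alpha\overline{\beta}|=|\langle x,e\rangle||\langle e,y\rangle|\le\|x\|\|y\|$ by Cauchy--Schwarz, so both sides of the desired bound are non-negative and one may square. Expanding $(\|x\|^2-|\alpha|^2)(\|y\|^2-|\beta|^2)$ and $(\|x\|\|y\|-|\alpha||\beta|)^2$ and subtracting, the difference collapses to $(\|x\||\beta|-|\alpha|\|y\|)^2\ge 0$, i.e.\ the AM--GM inequality $\|x\|^2|\beta|^2+|\alpha|^2\|y\|^2\ge 2\|x\|\|y\||\alpha||\beta|$. Substituting this refined estimate into the previous display yields $2|\alpha\overline{\beta}|\le|\langle x,y\rangle|+\|x\|\|y\|$, which is exactly Buzano's inequality. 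Apart from the AM--GM reduction, every step is routine bookkeeping from the orthogonal decomposition of $x$ and $y$ relative to $e$.
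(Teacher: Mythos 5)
Your argument is correct and complete. Note that the paper does not prove this lemma at all -- it is quoted from Buzano's original work \cite{a3} and used as a known tool -- so there is no in-paper proof to compare against. Your route (orthogonal decomposition $x=\alpha e+x_0$, $y=\beta e+y_0$ with $x_0,y_0\perp e$, the identity $\alpha\overline{\beta}=\langle x,y\rangle-\langle x_0,y_0\rangle$, and the sharpened estimate $\|x_0\|\|y_0\|\le\|x\|\|y\|-|\alpha||\beta|$ obtained from $(\|x\||\beta|-|\alpha|\|y\|)^2\ge 0$) is the standard proof of Buzano's inequality, and every step checks out: in particular the squaring in the key step is legitimate because $|\alpha|\le\|x\|$ and $|\beta|\le\|y\|$ by Cauchy--Schwarz, so both sides are nonnegative, and the final cancellation $2|\alpha\overline{\beta}|\le\|x\|\|y\|+|\langle x,y\rangle|$ delivers exactly the stated bound.
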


Now, we are in a position to present our results. First we develop the following upper bound for the numerical radius.

\begin{theorem}	\label{thre1}
	Let $ A\in\mathcal{B}(\mathscr{H}).$ Then \begin{eqnarray*}
		w^2(A)\leq\frac{1}{4} w^2 \left(|A|+i|A^*|\right)+\frac{1}{8}\left\||A|^2+|A^*|^2\right \|+\frac{1}{4}w\left(|A||A^*|\right).
	\end{eqnarray*}
\end{theorem}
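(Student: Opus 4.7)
The plan is to start from the generalized Cauchy--Schwarz inequality (Lemma \ref{lem1}) with $\alpha=\tfrac12$: for any unit vector $x\in\mathscr{H}$,
$$|\langle Ax,x\rangle|^2\le\langle|A|x,x\rangle\langle|A^*|x,x\rangle =: ab,$$
where $a=\langle|A|x,x\rangle$ and $b=\langle|A^*|x,x\rangle$ are non-negative reals. The entire game is to bound the product $ab$ by a sum of three quantities matching the right-hand side of the statement, and then to take the supremum over $\|x\|=1$.

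The key trick I would use is to split $ab=\tfrac12 ab+\tfrac12 ab$ and estimate the two halves in complementary ways. For the first half, apply the elementary AM--GM inequality $ab\le\tfrac12(a^2+b^2)$; since $|A|$ and $|A^*|$ are selfadjoint, both $a$ and $b$ are real, so $a+ib=\langle(|A|+i|A^*|)x,x\rangle$, and hence $a^2+b^2=|\langle(|A|+i|A^*|)x,x\rangle|^2\le w^2(|A|+i|A^*|)$. This already contributes the term $\tfrac14 w^2(|A|+i|A^*|)$.

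For the second half, invoke Buzano's inequality (Lemma \ref{lem3}) with $e=x$, substituting $|A|x$ and $|A^*|x$ in the two remaining slots, to get
$$ab\le\frac12\bigl(\||A|x\|\,\||A^*|x\|+|\langle|A^*||A|x,x\rangle|\bigr).$$
Then apply AM--GM once more to $\||A|x\|\,\||A^*|x\|\le\tfrac12(\||A|x\|^2+\||A^*|x\|^2)=\tfrac12\langle(|A|^2+|A^*|^2)x,x\rangle$, and bound the remaining inner product by $w(|A^*||A|)=w(|A||A^*|)$ (the numerical radius is invariant under taking adjoints, and $(|A||A^*|)^*=|A^*||A|$). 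Combining the two halves, passing to the supremum over unit vectors, and using $\langle(|A|^2+|A^*|^2)x,x\rangle\le\||A|^2+|A^*|^2\|$ yields exactly the claimed inequality.

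The main obstacle is conceptual rather than computational: recognizing that the two standard ways to estimate $ab$ (plain AM--GM versus Buzano) combine with equal weights $\tfrac12$ and $\tfrac12$ to produce precisely the coefficients $\tfrac14,\tfrac18,\tfrac14$ in the statement. Once this decomposition is spotted, the remaining steps are routine applications of self-adjointness, AM--GM, and the definition of numerical radius.
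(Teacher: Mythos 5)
Your proposal is correct and is essentially the paper's own argument: the authors pass through $ab\le\tfrac14(a+b)^2=\tfrac14(a^2+b^2)+\tfrac12 ab$, which is exactly your even split of $ab$ with AM--GM on one half, followed by the same identification $a^2+b^2=|\langle(|A|+i|A^*|)x,x\rangle|^2$, the same Buzano step on the other half, and the same final AM--GM and supremum. No substantive difference.
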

\begin{proof}
	Let $x\in \mathscr{H}$ with $\|x\|=1$. Then we have
	 \begin{eqnarray*}
		&& |\langle Ax,x \rangle|^2\\
		&\leq&\langle |A|x,x\rangle \langle |A^*|x,x\rangle\,\,(\textit{by Lemma \ref{lem1}})\\&\leq&\frac{1}{4}\left(\langle |A|x,x\rangle+ \langle |A^*|x,x\rangle\right)^2\\&=&\frac{1}{4}\left(\langle |A|x,x\rangle^2+ \langle |A^*|x,x\rangle^2+2\langle |A|x,x\rangle \langle |A^*|x,x\rangle\right)\\&\leq&\frac{1}{4}\left\lbrace|\langle |A|x,x\rangle+i \langle |A^*|x,x\rangle|^2+\||A|x\|\||A^*|x\|+|\langle|A|x,|A^*|x\rangle|\right\rbrace\,(\textit{by Lemma \ref{lem3}})\\&\leq&\frac{1}{4}\left\lbrace|\langle (|A|+i|A^*|)x,x\rangle|^2+\frac{1}{2}\||A|x\|^2+\frac{1}{2}\||A^*|x\|^2+|\langle|A^*||A|x,x\rangle |\right\rbrace\\&\leq& \frac{1}{4} w^2\left(|A|+i|A^*|\right)+\frac{1}{8}\left\||A|^2+|A^*|^2\right\|+\frac{1}{4}w\left(|A||A^*|\right).
	\end{eqnarray*} 
Taking supremum over all $x\in \mathcal{H}$ with $\|x\|=1$, we get the desired inequality.
\end{proof}

Clearly, we see that 
\begin{eqnarray*}
	&&\frac{1}{4} w^2\left(|A|+i|A^*|\right)+\frac{1}{8}\left\||A|^2+|A^*|^2\right\|+\frac{1}{4}w\left(|A||A^*|\right)\\
		&\leq&\frac{1}{4}\left\||A|^2+|A^*|^2\right\|+\frac{1}{8}\left\||A|^2+|A^*|^2\right\|+\frac{1}{4}\left\||A||A^*|\right\|\\
		&=&\frac{3}{8}\left\||A|^2+|A^*|^2\right\|+\frac{1}{4}\left\|A^2\right\|\\
		&\leq&\frac{3}{8}\left\||A|^2+|A^*|^2\right\|+\frac{1}{8}\left\||A|^2+|A^*|^2\right\|\\
		&=&\frac{1}{2}\left\||A|^2+|A^*|^2\right\|.
	\end{eqnarray*} 
Thus, we would like to remark that the upper bound obtained in Theorem \ref{thre1} refines the second inequality in (\ref{d}). 
Next result reads as follows.
 
\begin{theorem}\label{th1}
Let $ X,Y\in\mathbb{B}(\mathscr{H})$, and  	 $ 0\leq\alpha\leq 1$, $ 0\leq\beta\leq 1$.  Then for each $x\in \mathscr{H}$ with $\|x\|=1$,  
\begin{eqnarray*}\label{eq2}
&& |\langle Xx,x\rangle \langle Yx,x\rangle| \\
&\leq & 
\frac{1}{4} \left \| \alpha |X|^2+(1-\alpha)|X^*|^2+\beta|Y|^2+(1-\beta)|Y^*|^2 \right \|+\frac{1}{8} \left \||X|^2+|Y^*|^2 \right \|+\frac{1}{4}w(YX).
\end{eqnarray*} 
\end{theorem}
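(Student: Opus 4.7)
The plan is to combine two independent upper bounds on the product $|\langle Xx,x\rangle||\langle Yx,x\rangle|$ via a $\tfrac12/\tfrac12$ convex combination; it is precisely this averaging that produces the $\tfrac14/\tfrac18/\tfrac14$ coefficients in the statement.

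Bound one comes from Buzano's inequality (Lemma~\ref{lem3}) with the choice $u=Xx$, $v=Y^{*}x$, $e=x$: after noting that $|\langle x,Y^{*}x\rangle|=|\langle Yx,x\rangle|$ and $\langle Xx,Y^{*}x\rangle=\langle YXx,x\rangle$, this gives
\[
|\langle Xx,x\rangle||\langle Yx,x\rangle|\leq \tfrac12\|Xx\|\,\|Y^{*}x\|+\tfrac12|\langle YXx,x\rangle|.
\]
Bound two is the scalar AM-GM inequality $ab\leq \tfrac12(a^{2}+b^{2})$ applied with $a=|\langle Xx,x\rangle|$, $b=|\langle Yx,x\rangle|$. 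Averaging the two bounds with equal weights produces
\[
|\langle Xx,x\rangle||\langle Yx,x\rangle|\leq \tfrac14\|Xx\|\,\|Y^{*}x\|+\tfrac14|\langle YXx,x\rangle|+\tfrac14|\langle Xx,x\rangle|^{2}+\tfrac14|\langle Yx,x\rangle|^{2}.
\]

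The four remaining terms are then dispatched individually. For $|\langle Xx,x\rangle|^{2}$ the plan is to chain Lemma~\ref{lem1} with parameter $\alpha$, then Lemma~\ref{lem2} applied to $|X|^{2}$ with exponent $\alpha$ and to $|X^{*}|^{2}$ with exponent $1-\alpha$, and finally the scalar Young inequality $a^{\alpha}b^{1-\alpha}\leq\alpha a+(1-\alpha)b$, arriving at $|\langle Xx,x\rangle|^{2}\leq \langle(\alpha|X|^{2}+(1-\alpha)|X^{*}|^{2})x,x\rangle$; the same chain with $\beta$ in place of $\alpha$ handles $|\langle Yx,x\rangle|^{2}$. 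For the Buzano leftovers, a single AM-GM collapses $\|Xx\|\,\|Y^{*}x\|$ into $\tfrac12\langle(|X|^{2}+|Y^{*}|^{2})x,x\rangle$, while $|\langle YXx,x\rangle|\leq w(YX)$ holds by definition of the numerical radius. Passing from expectations to operator norms via $\langle Ax,x\rangle\leq\|A\|$ for positive $A$ and $\|x\|=1$ then assembles the four pieces into exactly the three-term bound in the statement.

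The only real obstacle is recognizing that one must interpolate between two qualitatively different estimates: Buzano alone yields the coefficient $\tfrac12$ in front of $w(YX)$, which is twice the desired value, while the Cauchy-Schwarz/Holder-McCarthy route produces no $w(YX)$ term at all. The $\tfrac14/\tfrac18/\tfrac14$ split in the statement is the combinatorial fingerprint of the equal-weights average, and once one sees that the two bounds must be mixed, the rest is a straightforward application of Lemmas~\ref{lem1}--\ref{lem3}.
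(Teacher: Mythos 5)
Your proof is correct and is essentially the paper's own argument: the paper obtains the same intermediate inequality by writing $|\langle Xx,x\rangle \langle Yx,x\rangle|\leq \frac{1}{4}\left(|\langle Xx,x\rangle|+|\langle Yx,x\rangle|\right)^2$, expanding the square, and applying Buzano to the cross term, which is algebraically identical to your equal-weights average of the Buzano and AM--GM bounds. The subsequent handling of each piece (Lemma~\ref{lem1} with parameters $\alpha,\beta$, Hölder--McCarthy, Young's inequality, $\|Xx\|\|Y^{*}x\|\leq\frac{1}{2}\langle(|X|^{2}+|Y^{*}|^{2})x,x\rangle$, and $|\langle YXx,x\rangle|\leq w(YX)$) matches the paper's proof step for step.
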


\begin{proof} 
 	We have
 \begin{eqnarray*}
  	&&|\langle Xx,x\rangle \langle Yx,x\rangle|\\
  	&\leq&\frac{1}{4}\left\lbrace|\langle Xx,x\rangle|+|\langle Yx,x\rangle|\right\rbrace^2\\
  	&=&\frac{1}{4}\left\lbrace|\langle Xx,x\rangle|^2+|\langle Yx,x\rangle|^2+2|\langle Xx,x\rangle||\langle Yx,x\rangle|\right\rbrace\\
  	&\leq&\frac{1}{4}\left\lbrace\langle |X|^{2\alpha}x,x\rangle\langle |X^*|^{2(1-\alpha)}x,x\rangle+\langle |Y|^{2\beta}x,x\rangle\langle |Y^*|^{2(1-\beta)}x,x\rangle+2|\langle Xx,x\rangle||\langle x,Y^*x\rangle|\right\rbrace\\
  	&& \,\,\,\,\,\,\,\,\, (\textit{using Lemma \ref{lem1}})\\
  	&\leq&\frac{1}{4}\left\lbrace\langle |X|^{2}x,x\rangle^{\alpha}\langle |X^*|^{2}x,x\rangle^{(1-\alpha)}+\langle |Y|^{2}x,x\rangle^{\beta}\langle |Y^*|^{2}x,x\rangle^{(1-\beta)}+\| Xx\|\|Y^*x\|+|\langle Xx,Y^*x\rangle|\right\rbrace\\
  	&&\,\,\,\,\,\,\,\,\,\, (\textit{using Lemma \ref{lem2} and Lemma  \ref{lem3} })\\
  	&\leq&\frac{1}{4}\left\lbrace{\alpha}\langle |X|^{2}x,x\rangle+(1-\alpha)\langle |X^*|^{2}x,x\rangle+\beta\langle |Y|^{2}x,x\rangle+(1-\beta)\langle |Y^*|^{2}x,x\rangle\right\rbrace\\
  	&&+\frac{1}{4}\left\lbrace\frac{1}{2}\left(\langle |X|^2x,x\rangle+\langle |Y^*|^2x,x\rangle\right)+|\langle YXx,x\rangle|\right\rbrace\\
  	&\leq&\frac{1}{4}\| \alpha|X|^2+(1-\alpha)|X^*|^2+\beta|Y|^2+(1-\beta)|Y^*|^2\|+\frac{1}{8}\||X|^2+|Y^*|^2\|+\frac{1}{4}w(YX). 
  \end{eqnarray*}

\end{proof}

Applying the inequality in Theorem \ref{th1} we derive the following upper bound for the  numerical radius.

\begin{cor}\label{cor1}
If $ A\in\mathcal{B}(\mathscr{H})$, then
\begin{eqnarray*} 
w^2(A)\leq\frac{1}{4} \left \| \mu|A|^2+(2-\mu)|A^*|^2 \right\|+\frac{1}{8}\left\||A|^2+|A^*|^2\right\|+\frac{1}{4}w(A^2),
\end{eqnarray*}
for $0\leq \mu \leq 2.$
\end{cor}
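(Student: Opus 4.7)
The plan is to obtain Corollary \ref{cor1} as a direct specialization of Theorem \ref{th1}. The corollary has the structure of the theorem's inequality with a single operator $A$ replacing the pair $(X,Y)$, so the natural move is to set $X=Y=A$ in Theorem \ref{th1} and then track how the two free parameters $\alpha,\beta$ collapse into a single parameter $\mu$.

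First I would plug $X=Y=A$ into Theorem \ref{th1}. The left-hand side becomes $|\langle Ax,x\rangle|^2$, and after taking the supremum over unit vectors $x\in\mathscr{H}$ we obtain $w^2(A)$. On the right-hand side, the term $\tfrac{1}{4}w(YX)$ becomes $\tfrac{1}{4}w(A^2)$, and the term $\tfrac{1}{8}\||X|^2+|Y^*|^2\|$ becomes $\tfrac{1}{8}\||A|^2+|A^*|^2\|$, matching the corollary exactly.

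The only nontrivial bookkeeping is in the first term. After substitution it reads
\begin{equation*}
\tfrac{1}{4}\bigl\|\alpha|A|^2+(1-\alpha)|A^*|^2+\beta|A|^2+(1-\beta)|A^*|^2\bigr\|
=\tfrac{1}{4}\bigl\|(\alpha+\beta)|A|^2+(2-\alpha-\beta)|A^*|^2\bigr\|.
\end{equation*}
Setting $\mu=\alpha+\beta$ and noting that as $\alpha,\beta$ range independently over $[0,1]$ their sum $\mu$ ranges over $[0,2]$, this is exactly $\tfrac{1}{4}\|\mu|A|^2+(2-\mu)|A^*|^2\|$. Conversely, for any $\mu\in[0,2]$ we may select, e.g., $\alpha=\min(\mu,1)$ and $\beta=\mu-\alpha$, both lying in $[0,1]$, recovering the stated family of bounds.

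I expect no real obstacle here: the corollary is a one-step consequence of Theorem \ref{th1}, and the only subtle point is justifying that the admissible range of $\mu$ is $[0,2]$ rather than $[0,1]$, which follows from the independence of $\alpha$ and $\beta$ in the hypothesis of Theorem \ref{th1}.
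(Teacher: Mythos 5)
Your proposal is correct and follows exactly the paper's own route: substitute $X=Y=A$ in Theorem \ref{th1}, take the supremum over unit vectors, and set $\mu=\alpha+\beta\in[0,2]$. The only addition is your explicit justification that every $\mu\in[0,2]$ is realized by some admissible pair $(\alpha,\beta)$, which the paper leaves implicit.
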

\begin{proof}
 Putting $X=Y=A$ in Theorem \ref{th1}, and then taking supremum over all  $x\in \mathscr{H}$ with $\|x\|=1$, we get 
 \begin{eqnarray*} 
w^2(A)\leq\frac{1}{4} \left \| (\alpha+\beta)|A|^2+(2-\alpha-\beta)|A^*|^2 \right \|+\frac{1}{8} \left \||A|^2+|A^*|^2 \right \|+\frac{1}{4}w(A^2),
\end{eqnarray*}
for $0\leq \alpha,\beta \leq 1$. This implies the desired bound.
\end{proof}

It follows from Corollary \ref{cor1} that
\begin{eqnarray}\label{rem1} 
w^2(A)\leq \frac{1}{4} \, \underset{\mu\in[0,2]}{\min}  \left\| \mu|A|^2+(2-\mu)|A^*|^2 \right\|+\frac{1}{8}\left\||A|^2+|A^*|^2\right\|+\frac{1}{4}w(A^2).
\end{eqnarray}


\begin{remark}\label{rem2} 
Clearly, We have                                                                                                                                                                                                                                                                                                                                                                                                                                                                                                                                                                                                                                                                                                                                                                                                                                                                                                                                                                                                                                                                                                                                                                                                                                                                                                                    
 \begin{eqnarray*}
 & &\underset{\mu \in [0,2]}{\min} \,\, \frac{1}{4} \left \| \mu |A|^2+(2-\mu)|A^*|^2 \right\|+\frac{1}{8} \left \||A|^2+|A^*|^2 \right\|+\frac{1}{4}w(A^2)\\
&\leq &\frac{1}{4}\| |A|^2+|A^*|^2\|+\frac{1}{8}\||A|^2+|A^*|^2\|+\frac{1}{4}w(A^2)\,\,\, (\textit{by taking $\mu =1$})\\
&=&\frac{3}{8}\||A|^2+|A^*|^2\|+\frac{1}{4}w(A^2)\\
&\leq&\frac{3}{8}\||A|^2+|A^*|^2\|+\frac{1}{4}w^2(A)\\
&\leq&\frac{3}{8}\||A|^2+|A^*|^2\|+\frac{1}{8}\||A|^2+|A^*|^2\| \,\,\,\,(\textit{using the second inequality of (\ref{d})})\\
&=&\frac{1}{2}\||A|^2+|A^*|^2\|.
 	\end{eqnarray*}
Thus, we would like to remark that inequality \eqref{rem1} is stronger than that in (\ref{d}). We also note that the minimum value is not always attained for  $\mu =1.$
For example, consider the matrix $A=\begin{pmatrix}
	0 & 1 &0\\
	0 & 0 & 2\\
	0 & 0 & 0
\end{pmatrix}.$ Then, $\underset{\mu \in [0,2]}{\min} \, \left \| \mu |A|^2+(2-\mu)|A^*|^2 \right\|= \frac{32}{7}$ for $\mu =\frac87,$ and we see that
 \begin{eqnarray*}
  \frac{1}{4}	\underset{\mu \in [0,2]}{\min} \left \| \mu |A|^2+(2-\mu)|A^*|^2 \right\|+\frac{1}{8} \left \||A|^2+|A^*|^2 \right\|+\frac{1}{4}w(A^2)&=& \frac{113}{56}  \approx 2.01785714\\
 	&< & \frac52=\frac{1}{2}\||A|^2+|A^*|^2\|.
 \end{eqnarray*}

\end{remark}

To prove our next result we need the following two  lemmas. First one is a generalization of the inequality in Lemma \ref{lem1}, and the second one is known as Bohr's inequality.

\begin{lemma}(\cite[Th. 5]{a1}) Let $A,B\in\mathbb{B}(\mathscr{H})$ with $|A|B=B^*|A|$. Let $f,g$ be two non-negative continuous functions on $[0,\infty)$ such that $f(t)g(t)=t$ for all $t\geq0.$
 Then$$|\langle ABx,y\rangle|\leq r(B)\|f(|A|)x\|\|g(|A^*|)y\|,$$ for all $x,y\in\mathscr{H}.$\label{l1}\end{lemma}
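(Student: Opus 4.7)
The plan is to distill the problem to the self-adjointness phenomenon underlying the twisted commutation $|A|B = B^*|A|$. Let $A = U|A|$ be the polar decomposition, and recall the intertwining $Uh(|A|) = h(|A^*|)U$ together with its adjoint $h(|A|)U^* = U^*h(|A^*|)$ for continuous $h$. Since $f,g$ are non-negative continuous with $f(t)g(t)=t$, functional calculus yields $|A| = f(|A|)g(|A|)$ with both factors self-adjoint and mutually commuting. A direct induction on the hypothesis gives $|A|B^n = (B^*)^n |A|$ for every $n\ge 1$; this iterated form is the vehicle through which the spectral radius $r(B) = \lim_n \|B^n\|^{1/n}$ will eventually appear.

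Using the hypothesis to slide $B$ across $|A|$,
\[
\langle ABx,y\rangle \;=\; \langle U|A|Bx,y\rangle \;=\; \langle UB^*|A|x,y\rangle \;=\; \langle |A|x,\, BU^*y\rangle .
\]
Splitting $|A| = f(|A|)g(|A|)$, transferring the self-adjoint factor $f(|A|)$ across the inner product, and applying Cauchy--Schwarz then produces
\[
|\langle ABx,y\rangle| \;\le\; \|f(|A|)x\|\cdot \|g(|A|)BU^*y\| .
\]
Rewriting $g(|A|)BU^*y = \bigl(g(|A|)Bg(|A|)^{-1}\bigr)\, U^*g(|A^*|)y$ via the intertwining and using $\|U^*\|\le 1$ reduces the problem to the single key estimate $\|g(|A|)Bg(|A|)^{-1}\| \le r(B)$, interpreted on $\overline{\mathrm{Ran}(|A|)}$, with the non-invertible case handled by regularizing $|A|$ to $|A|+\varepsilon I$ and letting $\varepsilon\downarrow 0$.

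This norm estimate is the principal obstacle and is where the hypothesis does its real work. In the canonical case $f = g = \sqrt{\,\cdot\,}$, the operator $D := |A|^{1/2}B|A|^{-1/2}$ is self-adjoint: rewriting the hypothesis as $B^* = |A|B|A|^{-1}$,
\[
D^* \;=\; |A|^{-1/2}B^*|A|^{1/2} \;=\; |A|^{-1/2}\bigl(|A|B|A|^{-1}\bigr)|A|^{1/2} \;=\; D ,
\]
and it is also similar to $B$, so $\|D\| = r(D) = r(B)$, which settles the case $f(t)=g(t)=t^{1/2}$ outright. To cover the general pair $f,g$, I would first treat the power family $f(t)=t^\alpha$, $g(t)=t^{1-\alpha}$ by interpolation between the self-adjoint endpoint $\alpha=1/2$ and the trivial endpoints $\alpha\in\{0,1\}$, applying Hadamard's three-lines theorem to the analytic operator family $z\mapsto |A|^z B |A|^{-z}$; then approximate general continuous $f,g$ uniformly by such power functions on $\sigma(|A|)$ to close out the proof.
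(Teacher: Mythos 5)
First, a point of reference: the paper gives no proof of this lemma at all --- it is imported verbatim from \cite[Th.\ 5]{a1} --- so your argument can only be judged on its own terms. Your reduction through $\langle ABx,y\rangle=\langle |A|x,BU^{*}y\rangle$ is fine, and the observation that $D=|A|^{1/2}B|A|^{-1/2}$ is self-adjoint and similar to $B$, whence $\|D\|=r(D)=r(B)$, is a genuinely nice way to settle the case $f=g=\sqrt{t}$ when $|A|$ is invertible. But even there your limiting step fails: replacing $|A|$ by $|A|+\varepsilon I$ destroys the hypothesis, since $(|A|+\varepsilon I)B=B^{*}(|A|+\varepsilon I)$ together with $|A|B=B^{*}|A|$ forces $B=B^{*}$. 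The classical route avoids inverting $|A|$ altogether: work with the semi-inner product $[u,v]=\langle |A|u,v\rangle$, use the hypothesis to get $[Bu,Bu]=[B^{2}u,u]$, iterate Cauchy--Schwarz (the Reid--Halmos squaring trick) to obtain $[Bu,Bu]\le \|B^{2^{n}}\|^{2/2^{n}}\bigl(\||A|\|\,\|u\|^{2}\bigr)^{1/2^{n-1}}[u,u]^{1-1/2^{n-1}}\to r(B)^{2}[u,u]$, and finish with $|\langle ABx,y\rangle|=|[Bx,U^{*}y]|\le r(B)\,\||A|^{1/2}x\|\,\||A|^{1/2}U^{*}y\|$ and $\||A|^{1/2}U^{*}y\|=\||A^{*}|^{1/2}y\|$.

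The fatal gap is the last step. Your key estimate $\|g(|A|)Bg(|A|)^{-1}\|\le r(B)$ is false away from the midpoint: for $g(t)=t$ the conjugate is $|A|B|A|^{-1}=B^{*}$, of norm $\|B\|$. Consequently the three-lines theorem applied to $z\mapsto |A|^{z}B|A|^{-z}$ can only interpolate between the endpoint value $\|B\|$ and the midpoint value $r(B)$, giving $\|B\|^{|1-2\alpha|}r(B)^{1-|1-2\alpha|}\ge r(B)$; it cannot push the bound down to $r(B)$ for $\alpha\ne\tfrac12$. This is not a defect of your method alone --- in the generality quoted here the statement itself seems to fail. Take $\mathscr{H}=\mathbb{C}^{2}$, $A=\begin{pmatrix}1&0\\0&1/4\end{pmatrix}$, $B=\begin{pmatrix}0&1/4\\1&0\end{pmatrix}$, $f\equiv 1$, $g(t)=t$, $x=e_{1}$, $y=e_{2}$. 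Then $|A|=A$ and $|A|B=B^{*}|A|=\begin{pmatrix}0&1/4\\1/4&0\end{pmatrix}$, $r(B)=\tfrac12$, and $|\langle ABe_{1},e_{2}\rangle|=\tfrac14$, whereas $r(B)\,\|f(|A|)e_{1}\|\,\|g(|A^{*}|)e_{2}\|=\tfrac12\cdot 1\cdot\tfrac14=\tfrac18$. So any correct proof must either restrict to $f=g=\sqrt{t}$ (the only case the paper actually uses in its later corollaries) or invoke hypotheses beyond those displayed; you should check the exact statement of \cite[Th.\ 5]{a1} before trying to close the general case.
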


 \begin{lemma}(\cite{n1})
 	 For $ i=1,2,\cdots,n$, let $a_i\geq0$.
 Then  $$ \left(\sum_{i=1}^{n}{a_i}\right)^p\leq n^{p-1}\sum_{i=1}^{n}{a_i^p},$$
 for all $p\geq1.$
\label{l2} \end{lemma}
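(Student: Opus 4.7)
The plan is to deduce Bohr's inequality from the convexity of the map $\varphi(t)=t^p$ on $[0,\infty)$ when $p\geq 1$. I would first verify convexity (it is immediate since $\varphi''(t)=p(p-1)t^{p-2}\geq 0$ for $t\geq 0$ and $p\geq 1$), and then invoke Jensen's inequality applied to the uniform probability measure placing mass $1/n$ on each $a_i$. This yields
\[
\left(\frac{1}{n}\sum_{i=1}^{n}a_i\right)^{p}=\varphi\!\left(\frac{1}{n}\sum_{i=1}^{n}a_i\right)\leq \frac{1}{n}\sum_{i=1}^{n}\varphi(a_i)=\frac{1}{n}\sum_{i=1}^{n}a_i^{p}.
\]
Multiplying both sides by $n^{p}$ produces exactly the stated inequality $\left(\sum_{i=1}^{n} a_i\right)^{p}\leq n^{p-1}\sum_{i=1}^{n} a_i^{p}$.

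As an alternative route I would offer a proof via H\"older's inequality. Taking conjugate exponents $p$ and $q=p/(p-1)$ (the case $p=1$ is trivial with equality), and pairing the sequence $(a_1,\dots,a_n)$ with the constant sequence $(1,1,\dots,1)$, H\"older gives
\[
\sum_{i=1}^{n} a_i =\sum_{i=1}^{n} a_i\cdot 1\leq \left(\sum_{i=1}^{n} a_i^{p}\right)^{1/p}\left(\sum_{i=1}^{n} 1^{q}\right)^{1/q}=n^{(p-1)/p}\left(\sum_{i=1}^{n} a_i^{p}\right)^{1/p}.
\]
Raising both sides to the $p$-th power recovers Bohr's inequality. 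Either route would take just a couple of lines; I lean toward the Jensen argument because it is the most transparent and explains why the constant $n^{p-1}$ is sharp (equality holding when all $a_i$ are equal).

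There is no substantive obstacle to overcome: this is a classical, one-line consequence of convexity. The only conceptual point worth flagging in a write-up is the role of the hypothesis $p\geq 1$, which is exactly what makes $t\mapsto t^{p}$ convex on $[0,\infty)$; for $0<p<1$ the reverse inequality holds instead, by the concavity of $t^{p}$. Since the lemma is merely cited from \cite{n1} and used as a tool in the subsequent numerical-radius bounds, I would keep the proof sketch to a single sentence in the paper and refer the reader to the original source.
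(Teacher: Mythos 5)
Your proof is correct. The paper states this lemma as a cited result from the reference of Vasi\'c and Ke\^cki\'c and gives no proof of its own, so there is nothing to compare against; your Jensen/convexity argument (and the alternative via H\"older) is the standard derivation and is exactly what one would supply here.
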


By using the above lemmas we prove the following inequality involving numerical radius and spectral radius.

\begin{theorem}
Let  $A_i,B_i\in\mathbb{B}(\mathscr{H})$ be such that $|A_i|B_i=B_i^*|A_i|$ for  $ i=1,2,\cdots,n.$  Then $$ w^p\left(\sum_{i=1}^{n}{A_iB_i}\right)\leq\frac{n^{p-1}}{\sqrt{2}} w\left(\sum_{i=1}^{n}{r^p(B_i)\left( f^{2p}(|A_i|)+i g^{2p}(|A_i^*|)\right)}\right),$$ for all $p\geq1$.
\label{threm1}\end{theorem}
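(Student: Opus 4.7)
The plan is to start from a unit vector $x\in\mathscr{H}$ and bound $|\langle(\sum A_iB_i)x,x\rangle|^p$ pointwise, then take the supremum over $\|x\|=1$. The chain of inequalities naturally splits into four bite-sized steps, each tied to one of the tools already in the excerpt.

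First I would apply the triangle inequality and then Lemma~\ref{l1} (with $y=x$) to each summand, yielding
\[
\left|\bigl\langle\textstyle\sum_i A_iB_i\,x,x\bigr\rangle\right|\;\leq\;\sum_{i=1}^n r(B_i)\,\|f(|A_i|)x\|\,\|g(|A_i^*|)x\|.
\]
Raising to the power $p$ and using Bohr-type Lemma~\ref{l2} on the outer sum of $n$ nonnegative terms transforms this into
\[
n^{p-1}\sum_{i=1}^n r^p(B_i)\,\|f(|A_i|)x\|^p\,\|g(|A_i^*|)x\|^p.
\]
Next, on each summand I would use the scalar AM-GM inequality $ab\leq\tfrac12(a^2+b^2)$ with $a=\|f(|A_i|)x\|^p$ and $b=\|g(|A_i^*|)x\|^p$, producing $\frac{1}{2}(\|f(|A_i|)x\|^{2p}+\|g(|A_i^*|)x\|^{2p})$. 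Since $f^2(|A_i|)$ and $g^2(|A_i^*|)$ are positive operators and $p\geq 1$, the Hölder--McCarthy inequality (Lemma~\ref{lem2}) then gives $\|f(|A_i|)x\|^{2p}=\langle f^2(|A_i|)x,x\rangle^p\leq \langle f^{2p}(|A_i|)x,x\rangle$, and similarly for $g$. Collecting, I obtain
\[
\left|\bigl\langle\textstyle\sum_i A_iB_i\,x,x\bigr\rangle\right|^p\;\leq\;\tfrac{n^{p-1}}{2}(X+Y),
\]
where $X=\sum_i r^p(B_i)\langle f^{2p}(|A_i|)x,x\rangle$ and $Y=\sum_i r^p(B_i)\langle g^{2p}(|A_i^*|)x,x\rangle$ are nonnegative reals.

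The last step is where the factor $\tfrac{1}{\sqrt 2}$ (rather than the more naive $\tfrac12$) appears. The elementary inequality $(X+Y)^2\leq 2(X^2+Y^2)$ gives $\tfrac{X+Y}{2}\leq \tfrac{1}{\sqrt 2}\sqrt{X^2+Y^2}$, and crucially
\[
\sqrt{X^2+Y^2}\;=\;\bigl|X+iY\bigr|\;=\;\bigl|\bigl\langle\textstyle\sum_i r^p(B_i)\bigl(f^{2p}(|A_i|)+ig^{2p}(|A_i^*|)\bigr)x,x\bigr\rangle\bigr|,
\]
precisely because both $\langle f^{2p}(|A_i|)x,x\rangle$ and $\langle g^{2p}(|A_i^*|)x,x\rangle$ are nonnegative reals, so they cleanly become the real and imaginary parts after multiplication by the positive scalars $r^p(B_i)$. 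Taking the supremum over all unit vectors $x$ then converts both sides into numerical radii and yields the claimed bound.

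The main obstacle, in my view, is not any single estimate but arranging the two halves so the constants land exactly on $n^{p-1}/\sqrt 2$: one must resist the temptation to invoke Lemma~\ref{l2} on the inner $2$-term sum (which would give the weaker constant $1/2$) and instead keep the inner sum intact and exploit it as $|X+iY|$ at the very end. The hypothesis $|A_i|B_i=B_i^*|A_i|$ is used only once (to license Lemma~\ref{l1}), and the constraint $p\geq 1$ is used only in Lemma~\ref{lem2} and Lemma~\ref{l2}.
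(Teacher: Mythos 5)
Your proof is correct and follows essentially the same route as the paper: triangle inequality, Lemma~\ref{l1}, Bohr's inequality (Lemma~\ref{l2}), the arithmetic--geometric mean bound, H\"older--McCarthy, and finally the identification of the two nonnegative sums as $|X+iY|$ via $(X+Y)\leq\sqrt{2}\,|X+iY|$. The only difference is a harmless reordering --- you apply Lemma~\ref{l2} before averaging, which absorbs the paper's separate convexity-of-$t^p$ step --- and the constants land in the same place.
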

\begin{proof}
Let $x\in \mathscr{H}$ with $\|x\|=1$. Then we have 
\begin{eqnarray*}
&&\left|\langle\left(\sum_{i=1}^{n}{A_iB_i}\right) x,x\rangle\right|^p\\&=&\left|\sum_{i=1}^{n}\langle{A_iB_i} x,x\rangle\right|^p\\&\leq&\left(\sum_{i=1}^{n}\left|\langle{A_iB_i} x,x\right|\right)^p\\&\leq&\left(\sum_{i=1}^{n}{r(B_i)\|f(|A_i|)x\|\|g(|A_i^*|)x\|}\right)^p \,\,(\textit{by Lemma \ref{l1}})\\&=&\left(\sum_{i=1}^{n}{r(B_i)\langle f^2(|A_i|)x,x\rangle^\frac{1}{2}\langle g^2(|A_i^*|)x,x\rangle^\frac{1}{2}}\right)^p\\&\leq&\left(\sum_{i=1}^{n}{r(B_i)\frac{\langle f^2(|A_i|)x,x\rangle+\langle g^2(|A_i^*|)x,x\rangle}{2}}\right)^p\\&\leq&n^{p-1}\sum_{i=1}^{n}{r^p(B_i)\left(\frac{\langle f^2(|A_i|)x,x\rangle+\langle g^2(|A_i^*|)x,x\rangle}{2}\right)^p} \,\,\,(\textit{by Lemma \ref{l2}})\\&\leq&\frac{n^{p-1}}{2}\sum_{i=1}^{n}{r^p(B_i)\left(\langle f^2(|A_i|)x,x\rangle^p+\langle g^2(|A_i^*|)x,x\rangle^p\right)} \,\,\,(\textit{by convexity of $f(t)=t^p $})\\&\leq&\frac{n^{p-1}}{2}\sum_{i=1}^{n}{r^p(B_i)\left(\langle f^{2p}(|A_i|)x,x\rangle+\langle g^{2p}(|A_i^*|)x,x\rangle\right)} \,\,\,(\textit{by Lemma \ref{lem2}})\\&\leq&\frac{n^{p-1}}{\sqrt{2}} \left |\sum_{i=1}^{n}{r^p(B_i)\left(\langle f^{2p}(|A_i|)x,x\rangle+ i\langle g^{2p}(|A_i^*|)x,x\rangle\right)} \right |\\
&& \,\,\,\,\,\,\,\,\, \,\,\,\,\,\,\,\,\,\,\,\, \,\,\,\,\,\,\,\,\,\,\,\, \,\,\,\,\,\,\,\,\,\,\,\, \,\,\,\,\,\,\,\,\,\,\,\, \,\,\,\,\,\,\,\,\,\,\,\, \,\,\,\,\,\,\,\,\,\,\,\, \,\,\,\,\,\,\,\,\,\,\,\, \,\,\,(\textit{as $|a+b|\leq\sqrt{2}|a+ib|$ for all $a,b\in\mathbb{R}$})\\&\leq&\frac{n^{p-1}}{\sqrt{2}} \left | \sum_{i=1}^{n}{r^p(B_i)\langle\left( f^{2p}(|A_i|)+ i g^{2p}(|A_i^*|)\right) x,x\rangle} \right | \\&\leq&\frac{n^{p-1}}{\sqrt{2}} w\left(\sum_{i=1}^{n}{r^p(B_i)\left( f^{2p}(|A_i|)+i g^{2p}(|A_i^*|)\right)}\right).
\end{eqnarray*}
Now, taking supremum over all $x\in \mathscr{H}$, $\|x\|=1$ we get,
$$ w^p\left(\sum_{i=1}^{n}{A_iB_i}\right)\leq\frac{n^{p-1}}{\sqrt{2}} w\left(\sum_{i=1}^{n}{r^p(B_i)\left( f^{2p}(|A_i|)+i g^{2p}(|A_i^*|)\right)}\right).$$
 as desired.

\end{proof}

Observe that the inequality in Theorem \ref{threm1} indeed does not depend on the number $n $ of summands in the case $p=1$.
In particular, considering  $p=n=1$, $A_1=A$, $B_1=B$, $f(t)=g(t)=\sqrt{t}$ in Theorem \ref{threm1}, we get the following corollary.

\begin{cor}\label{corol1}
Let $A,B\in \mathbb{B}(\mathscr{H})$ be such that $|A|B=B^*|A|$. Then $$w(AB)\leq\frac{1}{\sqrt{2}}r(B)w(|A|+i|A^*|).$$

\end{cor}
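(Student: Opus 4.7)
The plan is to obtain Corollary \ref{corol1} as a direct specialization of Theorem \ref{threm1}, so no new machinery is required; the work is to check that the hypotheses of the theorem are met by the chosen data and to simplify the resulting expression. I would take $n = 1$, $p = 1$, $A_1 = A$, $B_1 = B$, and $f(t) = g(t) = \sqrt{t}$. The relation $|A|B = B^*|A|$ needed in Theorem \ref{threm1} is exactly the hypothesis of the corollary, and $f,g$ are non-negative continuous on $[0,\infty)$ with $f(t)g(t) = \sqrt{t}\cdot\sqrt{t} = t$, so they satisfy the structural requirement inherited from Lemma \ref{l1}.

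Next I would substitute. With $p = 1$ the exponents $2p = 2$ give $f^{2p}(|A|) = (\sqrt{|A|})^2 = |A|$ and $g^{2p}(|A^*|) = |A^*|$. The prefactor $n^{p-1} = 1^{0} = 1$, and $r^{p}(B) = r(B)$. Theorem \ref{threm1} then reads
\begin{eqnarray*}
w(AB) \;\leq\; \frac{1}{\sqrt{2}}\, w\!\left( r(B)\bigl(|A| + i|A^*|\bigr)\right).
\end{eqnarray*}

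Finally, I would pull the non-negative scalar $r(B)$ out of the numerical radius using positive homogeneity, $w(\lambda T) = |\lambda|\, w(T)$, to arrive at
\begin{eqnarray*}
w(AB) \;\leq\; \frac{1}{\sqrt{2}}\, r(B)\, w\!\left(|A| + i|A^*|\right),
\end{eqnarray*}
which is exactly the claim. There is no real obstacle here beyond bookkeeping: the only point worth double-checking is that $f(t) = g(t) = \sqrt{t}$ is an admissible factorization in Lemma \ref{l1} (it is, since both factors are non-negative and continuous on $[0,\infty)$ and multiply to $t$), so the specialization is fully legitimate.
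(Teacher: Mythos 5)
Your proposal is correct and follows exactly the paper's own route: the authors obtain this corollary by setting $p=n=1$, $A_1=A$, $B_1=B$, $f(t)=g(t)=\sqrt{t}$ in Theorem \ref{threm1}, just as you do. The only detail you add explicitly, factoring the non-negative scalar $r(B)$ out of the numerical radius, is routine and correct.
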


In particular, for $B=I$ we have the following inequality (also obtained in \cite{BPBUL}): 
\begin{eqnarray}\label{pp25}
w(A)\leq\frac{1}{\sqrt{2}}w(|A|+i|A^*|).
\end{eqnarray}
	Note that the bound \eqref{pp25} refines that in (\ref{d}), see \cite[Remark 2.16]{BPBUL}.
Again, considering $B_i=I$ for  $ i=1,2,\cdots,n$ in  Theorem \ref{threm1} we have the following inequality for the sum of operators. 

\begin{cor}
	Let  $A_i\in\mathbb{B}(\mathscr{H})$ for  $ i=1,2,\cdots,n,$ and let $f,g$ be two non-negative continuous functions on $[0,\infty)$ such that $f(t)g(t)=t$ for all $t\geq0.$  Then $$ w^p\left(\sum_{i=1}^{n}{A_i}\right)\leq\frac{n^{p-1}}{\sqrt{2}} w\left(\sum_{i=1}^{n}{\left( f^{2p}(|A_i|)+i g^{2p}(|A_i^*|)\right)}\right),$$ for all $p\geq1$.
	\label{corpp}\end{cor}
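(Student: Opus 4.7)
The plan is to deduce this corollary directly from Theorem \ref{threm1} by the specialization $B_i = I$ for each $i = 1, 2, \ldots, n$. The only thing to check is that the hypothesis of Theorem \ref{threm1} remains valid under this specialization, and that the right-hand side simplifies exactly to the claimed expression.

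First, I would verify the commutation condition $|A_i| B_i = B_i^* |A_i|$. With $B_i = I$, both sides equal $|A_i|$, so the hypothesis is automatically satisfied regardless of the choice of $A_i \in \mathbb{B}(\mathscr{H})$. Next, I would record the value $r(B_i) = r(I) = 1$, which means $r^p(B_i) = 1$ for every $p \geq 1$ and drops out of every term on the right-hand side of the inequality in Theorem \ref{threm1}.

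Finally, substituting $B_i = I$ and $r^p(B_i) = 1$ into the conclusion of Theorem \ref{threm1} gives
\begin{equation*}
w^p\left(\sum_{i=1}^{n} A_i I\right) \leq \frac{n^{p-1}}{\sqrt{2}} \, w\left(\sum_{i=1}^{n} 1 \cdot \left(f^{2p}(|A_i|) + i g^{2p}(|A_i^*|)\right)\right),
\end{equation*}
and since $A_i I = A_i$, this is precisely the claimed bound. There is no main obstacle here: the statement is a genuine corollary, and the only mild subtlety is noticing that the commutation hypothesis of the parent theorem is trivially satisfied by the identity operator, so no restriction is placed on the $A_i$. I would keep the proof to two or three lines, essentially just pointing to Theorem \ref{threm1} and executing the substitution.
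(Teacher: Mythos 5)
Your proposal is correct and matches the paper's own derivation exactly: the paper obtains this corollary by setting $B_i=I$ in Theorem \ref{threm1}, just as you do. Your additional remarks that $|A_i|I=I|A_i|$ holds trivially and that $r(I)=1$ are the right (and only) details to check.
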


In particular, for $n=1$ and $f(t)=g(t)=\sqrt{t}$ in Corollary \ref{corpp}, we get the following upper bound for the numerical radius.

\begin{cor}\label{corppp}
		If  $A\in\mathbb{B}(\mathscr{H})$, then 
		$$ w^p(A) \leq \frac{1}{\sqrt{2} } w(|A|^p+i|A^*|^p  ),$$ for all $p\geq 1.$
\end{cor}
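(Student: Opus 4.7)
The plan is to derive this as a direct specialization of Corollary \ref{corpp}, which has just been established. No new ideas or inequalities are required; the entire task reduces to choosing the right parameters and verifying the resulting bookkeeping.

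Specifically, I would take $n = 1$, $A_1 = A$, and choose the pair of functions $f(t) = g(t) = \sqrt{t}$ on $[0, \infty)$. These are non-negative and continuous and satisfy $f(t) g(t) = t$ for all $t \ge 0$, so the hypothesis of Corollary \ref{corpp} is met. The continuous functional calculus applied to the positive operators $|A|$ and $|A^*|$ then gives $f^{2p}(|A|) = (|A|^{1/2})^{2p} = |A|^p$ and $g^{2p}(|A^*|) = (|A^*|^{1/2})^{2p} = |A^*|^p$, while the prefactor $n^{p-1} = 1^{p-1}$ collapses to $1$.

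Feeding these identifications into the conclusion of Corollary \ref{corpp} directly yields
$$w^p(A) \leq \frac{1}{\sqrt{2}}\, w\bigl(|A|^p + i\,|A^*|^p\bigr),$$
which is the claim. There is essentially no obstacle at this stage: the technical work, namely Kato's generalized Cauchy-Schwarz inequality (Lemma \ref{lem1} together with its strengthening in Lemma \ref{l1}), the Holder-McCarthy inequality (Lemma \ref{lem2}), the convexity of $t \mapsto t^p$ on $[0,\infty)$ for $p \geq 1$, Lemma \ref{l2}, and the elementary observation $|a+b| \leq \sqrt{2}\,|a+ib|$ for real $a, b$, has already been carried out inside the proofs of Theorem \ref{threm1} and Corollary \ref{corpp}, and the present statement is recovered by a single substitution.
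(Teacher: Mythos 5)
Your proposal is correct and matches the paper exactly: the authors obtain Corollary \ref{corppp} precisely by setting $n=1$ and $f(t)=g(t)=\sqrt{t}$ in Corollary \ref{corpp}, so that $f^{2p}(|A|)=|A|^p$, $g^{2p}(|A^*|)=|A^*|^p$, and the factor $n^{p-1}$ becomes $1$. Nothing further is needed.
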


It is easy to verify that  $\frac{1}{\sqrt{2} } w(|A|^p+i|A^*|^p  ) \leq \|A\|^p$ for all $p\geq 1.$ Therefore, we would like to remark that Corollary \ref{corppp} improves the classical bound $w(A) \leq \|A\|$ for all $p\geq 1.$

At the end of this section, we give a sufficient condition for the equality of $ w(A)=\frac{1}{2}\|A^*A+AA^*\|^{1/2}.$ For this purpose first we note the following known lemma.

\begin{lemma}\cite{a8}
	Let $A,B\in \mathbb{B}(\mathscr{H})$ be positive. Then, $\|A+B\|=\|A\|+\|B\|$ if and only if $\|AB\|=\|A\|\|B\|.$
	\label{lem7}\end{lemma}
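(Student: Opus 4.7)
The plan is to work with maximizing sequences of unit vectors and to use the following observation as the main technical tool: if $P \in \mathbb{B}(\mathscr{H})$ is positive with $M = \|P\|$, and $x_n \in \mathscr{H}$ are unit vectors satisfying $\langle Px_n, x_n\rangle \to M$, then $(P-MI)x_n \to 0$ in norm. Indeed, $MI - P \geq 0$ and
\[
\bigl\|(MI - P)^{1/2} x_n\bigr\|^2 \;=\; M - \langle P x_n, x_n\rangle \;\to\; 0,
\]
so $(MI-P)^{1/2} x_n \to 0$, and hence $(MI-P) x_n \to 0$. The trivial cases $\|A\| = 0$ or $\|B\| = 0$ are handled separately, so I may assume both norms are positive.

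For the forward implication, I would pick unit vectors $x_n$ with $\langle (A+B)x_n, x_n\rangle \to \|A\| + \|B\|$. Since $\langle Ax_n, x_n\rangle \leq \|A\|$ and $\langle Bx_n, x_n\rangle \leq \|B\|$, both inner products must converge to their respective maxima, and the observation above delivers $Ax_n - \|A\| x_n \to 0$ and $Bx_n - \|B\| x_n \to 0$. Writing $ABx_n = \|B\|\, Ax_n + A(Bx_n - \|B\| x_n)$ and using boundedness of $A$ gives $\|ABx_n\| \to \|A\|\|B\|$, and combined with submultiplicativity $\|AB\| \leq \|A\|\|B\|$ this yields equality.

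For the converse, I would pick unit $x_n$ with $\|ABx_n\| \to \|A\|\|B\|$. The squeeze $\|ABx_n\| \leq \|A\| \|Bx_n\| \leq \|A\|\|B\|$ forces $\|Bx_n\| \to \|B\|$, i.e.\ $\langle B^2 x_n, x_n\rangle \to \|B\|^2$. The operator inequality $B^2 \leq \|B\|\, B$, a consequence of $B \leq \|B\|\, I$, then forces $\langle Bx_n, x_n\rangle \to \|B\|$, and the key observation gives $Bx_n - \|B\|\, x_n \to 0$. From
\[
\bigl\|ABx_n - \|B\|\, Ax_n\bigr\| \;\leq\; \|A\|\, \bigl\|Bx_n - \|B\|\, x_n\bigr\| \;\to\; 0
\]
together with $\|ABx_n\| \to \|A\|\|B\|$ we deduce $\|Ax_n\| \to \|A\|$; running the same argument with $A$ in place of $B$ then yields $\langle Ax_n, x_n\rangle \to \|A\|$. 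Adding gives $\langle (A+B)x_n, x_n\rangle \to \|A\| + \|B\|$, and hence $\|A+B\| = \|A\| + \|B\|$.

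The step I expect to be most delicate is the passage, in the converse direction, from $\|Bx_n\| \to \|B\|$ to control of the diagonal $\langle Ax_n, x_n\rangle$; the trick is to exploit the operator inequality $B^2 \leq \|B\|\, B$ to push convergence from $B^2$ down to $B$, and only then invoke the technical lemma about $(P-MI)x_n \to 0$. In finite dimensions or for compact operators one could shortcut this by extracting convergent subsequences that land on eigenvectors at the top of the spectrum, but the operator-inequality route works throughout $\mathbb{B}(\mathscr{H})$.
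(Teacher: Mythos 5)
This lemma is quoted from Kittaneh's paper \cite{a8} and the present paper gives no proof of it, so there is nothing internal to compare against; I can only judge your argument on its own terms, and it is correct. The key observation is sound: for positive $P$ with $M=\|P\|$ one has $\bigl\|(MI-P)^{1/2}x_n\bigr\|^2 = M-\langle Px_n,x_n\rangle$, so a maximizing sequence for the quadratic form is an approximate eigenvector, and since $A+B\ge 0$ its norm is the supremum of $\langle (A+B)x,x\rangle$, which legitimizes your choice of $x_n$ in the forward direction. The converse is also handled correctly: the chain $\|ABx_n\|\le\|A\|\,\|Bx_n\|\le\|A\|\,\|B\|$ pins down $\|Bx_n\|$, the operator inequality $B^2\le\|B\|B$ (valid because $B^{1/2}(\|B\|I-B)B^{1/2}\ge 0$) transfers this to the quadratic form of $B$, and the bootstrapping to $\|Ax_n\|\to\|A\|$ and then to $\langle Ax_n,x_n\rangle\to\|A\|$ is airtight, with the degenerate cases $\|A\|=0$ or $\|B\|=0$ correctly set aside. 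Your route is a direct, elementary approximate-eigenvector argument; the source \cite{a8} instead obtains this equivalence as a byproduct of norm inequalities for sums of positive operators (essentially the machinery behind Lemma \ref{lem4} of this paper, using $\|A^{1/2}B^{1/2}\|^2=r(AB)$), so your proof has the advantage of being self-contained and not requiring those estimates.
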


\begin{theorem}\label{Thsuff}
	Let $A\in\mathbb{B}(\mathscr{H})$. Then $ \|A\|^4=\|\Re^2(A)\Im^2(A)\|$ implies  $$ w^2(A)=\frac{1}{4}\|A^*A+AA^*\|.$$
\end{theorem}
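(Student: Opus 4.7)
The plan is to show that the hypothesis is rigid enough to force $w(A)=\|A\|$ and simultaneously $\tfrac14\|A^*A+AA^*\|=\|A\|^2$; the desired equality then follows trivially. The key algebraic identity behind everything is
\begin{equation*}
A^*A+AA^* \;=\; 2\bigl(\Re^2(A)+\Im^2(A)\bigr),
\end{equation*}
which one checks by expanding $A=\Re(A)+i\,\Im(A)$. I would state (or recall) this identity at the outset.

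First I would chase the chain of inequalities
\begin{equation*}
\|A\|^{4} \;=\; \|\Re^2(A)\Im^2(A)\| \;\leq\; \|\Re^2(A)\|\,\|\Im^2(A)\| \;=\; \|\Re(A)\|^{2}\|\Im(A)\|^{2} \;\leq\; w^{4}(A) \;\leq\; \|A\|^{4},
\end{equation*}
using submultiplicativity of the operator norm, the fact that $\Re(A),\Im(A)$ are self-adjoint (so $\|\Re^2(A)\|=\|\Re(A)\|^2$, etc.), the standard bound $\|\Re(A)\|,\|\Im(A)\|\le w(A)$ (which follows from $|\Re\langle Ax,x\rangle|\le|\langle Ax,x\rangle|$ and self-adjointness), and the trivial $w(A)\le\|A\|$. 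The hypothesis collapses this chain to equalities. In particular $\|\Re(A)\|\cdot\|\Im(A)\|=\|A\|^{2}$, and since both factors are bounded by $\|A\|$, each must equal $\|A\|$; simultaneously $w(A)=\|A\|$.

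Next I would invoke Lemma~\ref{lem7} applied to the positive operators $\Re^2(A)$ and $\Im^2(A)$: the equality $\|\Re^2(A)\Im^2(A)\|=\|\Re^2(A)\|\,\|\Im^2(A)\|$, forced by the previous step, is equivalent to
\begin{equation*}
\|\Re^2(A)+\Im^2(A)\| \;=\; \|\Re^2(A)\|+\|\Im^2(A)\| \;=\; \|\Re(A)\|^{2}+\|\Im(A)\|^{2} \;=\; 2\|A\|^{2}.
\end{equation*}
Combining this with the identity above gives $\|A^*A+AA^*\|=4\|A\|^{2}$, and together with $w(A)=\|A\|$ this yields $w^2(A)=\|A\|^2=\tfrac14\|A^*A+AA^*\|$, as desired.

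The main (minor) obstacle is just assembling the two ingredients correctly: the hypothesis must simultaneously (i) squeeze $w(A)$ up to $\|A\|$ through the submultiplicativity chain, and (ii) activate Lemma~\ref{lem7} to turn the product-norm equality into a sum-norm equality for $\Re^2(A)$ and $\Im^2(A)$. Neither piece alone is enough; both are needed because $w^2(A)=\tfrac14\|A^*A+AA^*\|$ pins down both a numerical-radius equality and a sum-of-positives norm equality.
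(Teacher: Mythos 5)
Your proof is correct and follows essentially the same route as the paper: collapse the inequality chain $\|A\|^4=\|\Re^2(A)\Im^2(A)\|\le\|\Re(A)\|^2\|\Im(A)\|^2\le w^4(A)\le\|A\|^4$ to equalities, then apply Lemma~\ref{lem7} to the positive operators $\Re^2(A)$ and $\Im^2(A)$ together with the identity $A^*A+AA^*=2\bigl(\Re^2(A)+\Im^2(A)\bigr)$. The only (harmless) cosmetic differences are that you make that identity explicit and pin the common value of $\|\Re(A)\|$, $\|\Im(A)\|$, $w(A)$ to $\|A\|$, while the paper routes the squeeze through $\tfrac12(a^4+b^4)\le\max(a^4,b^4)$ to conclude $\|\Re(A)\|=\|\Im(A)\|=w(A)$.
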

\begin{proof}
	We have 
	\begin{eqnarray*}
	\|A\|^4 &=&	\|\Re^2(A)\Im^2(A)\|\leq\|\Re^2(A)\|\|\Im^2(A)\|=\|\Re(A)\|^2\|\Im(A)\|^2\\&\leq&\frac{1}{2}\left(\|\Re(A)\|^4+\|\Im(A)\|^4\right) \leq \max\left(\|\Re(A)\|^4,\|\Im(A)\|^4\right)\\&\leq&w^4(A)\leq \|A\|^4.
	\end{eqnarray*}
	This implies that
	 \begin{eqnarray}
		\|\Re^2(A)\Im^2(A)\|=\|\Re(A)\|^2\|\Im(A)\|^2.
		\label{eq10}\end{eqnarray}
	Also, we have
	\begin{eqnarray}
		\frac{1}{2}\left(\|\Re(A)\|^4+\|\Im(A)\|^4\right)=\max\left(\|\Re(A)\|^4,\|\Im(A)\|^4\right)=w^4(A).
		\label{eq11}
	\end{eqnarray}
This implies that
	 \begin{eqnarray}
		\|\Re(A)\|=\|\Im(A)\|=w(A).
		\label{eq13}
	\end{eqnarray} 
Now, by using lemma \ref{lem7}, it follows from the identity (\ref{eq10}) that
 \begin{eqnarray*}
		\frac{1}{2}\|\Re^2(A)+\Im^2(A)\|&=&\frac{1}{2}\left(\|\Re^2(A)\|+\|\Im^2(A)\|\right)\\&=&\frac{1}{2}\left(\|\Re(A)\|^2+\|\Im(A)\|^2\right)\\
		&=& \|\Re(A)\|^2= w^2(A) \,\, (\textit{using \eqref{eq13}}).
	\end{eqnarray*}
This completes the proof.
	
\end{proof}

It should be mentioned here that the converse of Theorem \ref{Thsuff} is not true, in general. For example, we consider $A=\begin{pmatrix}
0 & 3 & 0 \\
0 & 0 & 0 \\
0 & 0 & 1\\
\end{pmatrix}.$ 
Then, $ w^2(A)=\frac{1}{4}\|A^*A+AA^*\|=\frac{9}{4},$ however $\|A\|^4\neq\|\Re^2(A)\Im^2(A)\|.$\\


\section{Estimation of zeros of polynomials}
Suppose  $ p(z) = z^n + a_nz^{n-1} + \ldots + a_2z + a_1 $ is a complex monic polynomial of degree $n\geq 2 $ and $a_1 \neq 0$.                                                                                                    
Location of the zeros of  $p(z)$
have been obtained by applying  numerical radius  inequalities  to Frobenius
companion matrix of the polynomial $p(z)$.  The Frobenius
companion matrix of the polynomial $p(z)$ is given by $$C_p=\begin{pmatrix}
-a_n & -a_{n-1} & .... & -a_2 & -a_1\\
1 & 0 & ...& 0 & 0\\
0 & 1 & ... & 0 & 0\\
\vdots & \vdots & \ddots & \vdots & \vdots\\
0 & 0 & .... & 1 & 0
\end{pmatrix}.$$
The characteristic polynomial of $ C_p$ is the polynomial $p(z)$. Thus, the zeros of $p(z)$ are exactly the eigenvalues of $C_p$, see \cite[p. 316]{a14}.
The square of $C_p$ is given by
$$ C_p^2=\begin{pmatrix}
b_n & b_{n-1} & ..... & b_3  & b_2 & b_1 \\
-a_n & -a_{n-1} & ....& -a_3 & -a_2 & -a_1\\
1 & 0 & ... & 0 & 0 & 0\\
0 & 1 & ... & 0 & 0 & 0\\
\vdots & \vdots & \ddots & \vdots & \vdots & \vdots\\
0 & 0 & .... & 1 & 0 & 0
\end{pmatrix},$$where $b_j = a_na_j-a_{j-1}$ for $j = 1, 2,\ldots, n,$ with $a_0 = 0$.\\ Also, $$ C_p^3=\begin{pmatrix}
c_n & c_{n-1} & .....& c_4 & c_3 & c_2 & c_1\\
b_n & b_{n-1} & ..... & b_4 & b_3  & b_2 & b_1 \\
-a_n & -a_{n-1} & .... &-a_4 & -a_3 & -a_2 & -a_1\\
1 & 0 & ...& 0 & 0 & 0 & 0\\
0 & 1 & ...& 0 & 0 & 0 & 0\\
\vdots & \vdots & \ddots & \vdots & \vdots & \vdots & \vdots\\
0 & 0 & ....&1 & 0 & 0 & 0
\end{pmatrix},$$ where $b_j = a_na_j -a_{j-1}$ and $c_j = -a_nb_j + a_{n-1}a_j - a_{j-2}$ for $j = 1, 2,\ldots,n ,$ with $a_0 = a_{-1} = 0$,\\
and $$ C_p^4=\begin{pmatrix}
d_n & d_{n-1} & ..... & d_5 & d_4 & d_3 & d_2 & d_1\\
c_n & c_{n-1} & ..... & c_5& c_4 & c_3 & c_2 & c_1\\
b_n & b_{n-1} & .....& b_5 & b_4 & b_3  & b_2 & b_1 \\
-a_n & -a_{n-1} & .... & -a_5 &-a_4 & -a_3 & -a_2 & -a_1\\
1 & 0 & ...& 0& 0 & 0 & 0 & 0\\
0 & 1 & ...& 0& 0 & 0 & 0 & 0\\
\vdots & \vdots & \ddots & \vdots& \vdots & \vdots & \vdots & \vdots\\
0 & 0 & ....&1 &0 & 0 & 0 & 0
\end{pmatrix},$$ where  $b_j = a_na_j - a_{j-1}$, $c_j = -a_nb_j + a_{n-1}a_j - a_{j-2}$, and $d_j = -a_nc_j - a_{n-1}b_{j-1} + a_{n-2}a_j - a_{j-3}$  for $j = 1, 2,\ldots, n,$  with
$a_0 = a_{-1} = a_{-2} = 0$.

The exact value of $\|C_p\|$ is well known (see in \cite{a7}), it is given by
\begin{eqnarray}
	\|C_p\|=\sqrt\frac{{\alpha+1+\sqrt{(\alpha+1)^2-4|a_1|^2}}}{2},
\end{eqnarray} where $\alpha=\sum_{j=1}^{n}{|a_j|^2}$.

An estimation of $\|C_p^2\|$ obtained in \cite{a21} is as follows
 \begin{eqnarray}\label{ppp26}
	\|C_p^2\|\leq\sqrt{\frac{\delta+1+\sqrt{(\delta-1)^2+4\delta'}}{2}},
	\label{eq5}\end{eqnarray}
where $\delta=\frac{1}{2}\left(\alpha+\beta+\sqrt{(\alpha-\beta)^2+4|\gamma|^2}\right)$ and  $\delta'=\frac{1}{2}\left(\alpha'+\beta'+\sqrt{(\alpha'-\beta')^2+4|\gamma'|^2}\right)$, $\alpha=\sum_{j=1}^{n}{|a_j|^2}$, $\beta=\sum_{j=1}^{n}{|b_j|^2}$, $\alpha'=\sum_{j=3}^{n}{|a_j|^2}$, $\beta'=\sum_{j=3}^{n}{|b_j|^2}$, $\gamma=-\sum_{j=1}^{n}{\bar{a_j}b_j}$, $\gamma'=-\sum_{j=3}^{n}{\bar{a_j}b_j}.$

We note that  
\begin{eqnarray*}
	\|C_p^2\|^{\frac12}\leq \left({\sqrt{\frac{\delta+1+\sqrt{(\delta-1)^2+4\delta'}}{2}}}\right)^{1/2} \leq \sqrt\frac{{\alpha+1+\sqrt{(\alpha+1)^2-4|a_1|^2}}}{2}=\|C_p\|.
\end{eqnarray*}

Motivated by the above estimation, here we will obtain an estimation of $\|C_p^4\|^{1/4}$. For this purpose first we note the following norm inequality for the sum of two positive operators.

\begin{lemma}\cite{a8}
If $ A,B\in\mathbb{B}(\mathscr{H})$ are positive, then $$ \|A+B\|\leq\frac{1}{2}\left(\|A\|+\|B\|+\sqrt{ \left(\|A\|-\|B\|\right)^2+
	4\left \|A^\frac{1}{2}B^\frac{1}{2} \right\|^2}\right).$$
\label{lem4}\end{lemma}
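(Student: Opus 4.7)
The plan is to reduce the inequality to a two-step norm bound using a standard $2\times 2$ block operator matrix trick. First, consider the block operator matrix $X=\begin{pmatrix} A^{1/2} & B^{1/2} \\ 0 & 0\end{pmatrix}$ acting on $\mathscr{H}\oplus\mathscr{H}$. A direct computation gives
\[
XX^{*}=\begin{pmatrix} A+B & 0 \\ 0 & 0\end{pmatrix},\qquad X^{*}X=\begin{pmatrix} A & A^{1/2}B^{1/2} \\ B^{1/2}A^{1/2} & B\end{pmatrix},
\]
and the $C^{*}$-identity $\|XX^{*}\|=\|X^{*}X\|$ yields
\[
\|A+B\|=\left\|\begin{pmatrix} A & A^{1/2}B^{1/2} \\ B^{1/2}A^{1/2} & B\end{pmatrix}\right\|.
\]

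Call this Hermitian block matrix $T$. The key step is to bound $\|T\|$ by the norm of a scalar $2\times 2$ matrix. Since $T$ is self-adjoint, $\|T\|=w(T)$, so for any unit vector $(x,y)^{T}\in\mathscr{H}\oplus\mathscr{H}$ with $\|x\|^{2}+\|y\|^{2}=1$, I would expand $\langle T(x,y)^{T},(x,y)^{T}\rangle$ and apply the triangle inequality together with the elementary bounds $|\langle Ax,x\rangle|\le \|A\|\,\|x\|^{2}$, $|\langle A^{1/2}B^{1/2}y,x\rangle|\le \|A^{1/2}B^{1/2}\|\,\|x\|\,\|y\|$, and the symmetric counterpart using $\|B^{1/2}A^{1/2}\|=\|A^{1/2}B^{1/2}\|$. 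This collapses the quadratic form on $\mathscr{H}\oplus\mathscr{H}$ into the quadratic form on $\mathbb{R}^{2}$ associated with the scalar matrix
\[
M=\begin{pmatrix} \|A\| & \|A^{1/2}B^{1/2}\| \\ \|A^{1/2}B^{1/2}\| & \|B\|\end{pmatrix},
\]
giving $\|T\|=w(T)\le \|M\|$ after taking the supremum over unit $(x,y)^{T}$, since $(\|x\|,\|y\|)$ is then a unit vector in $\mathbb{R}^{2}$.

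The last step is the explicit eigenvalue computation for the Hermitian $2\times 2$ matrix $M$. Because $\det M=\|A\|\|B\|-\|A^{1/2}B^{1/2}\|^{2}\ge 0$ (from $\|A^{1/2}B^{1/2}\|^{2}=\|A^{1/2}BA^{1/2}\|\le \|A\|\|B\|$) and $\operatorname{tr}M\ge 0$, both eigenvalues of $M$ are non-negative, so $\|M\|$ equals the larger root
\[
\tfrac{1}{2}\Bigl(\|A\|+\|B\|+\sqrt{(\|A\|-\|B\|)^{2}+4\|A^{1/2}B^{1/2}\|^{2}}\Bigr),
\]
which matches the right-hand side of the claim. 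The main obstacle is the middle step: tracking the mixed cross-terms carefully enough so that the full block quadratic form is dominated term by term by the scalar one. Once the adjoint-invariance $\|B^{1/2}A^{1/2}\|=\|A^{1/2}B^{1/2}\|$ is invoked, the remaining arithmetic is routine.
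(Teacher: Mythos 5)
Your argument is correct and complete: the $C^*$-identity applied to $X=\begin{pmatrix} A^{1/2} & B^{1/2}\\ 0&0\end{pmatrix}$ does give $\|A+B\|=\|X^*X\|$, the quadratic form of the self-adjoint block $X^*X$ is dominated termwise by that of the scalar matrix $M$ (using $\|B^{1/2}A^{1/2}\|=\|A^{1/2}B^{1/2}\|$ and the fact that $(\|x\|,\|y\|)$ is a unit vector of $\mathbb{R}^2$), and the eigenvalue computation for $M$ yields exactly the stated bound. Note that the paper itself gives no proof of this lemma --- it is quoted from Kittaneh (J. Operator Theory 48 (2002)) --- and your construction is essentially the original argument for that result, so there is nothing to fault; the only cosmetic remark is that you do not even need $\det M\ge 0$, since the Rayleigh-quotient bound $\langle Mu,u\rangle\le\lambda_{\max}(M)$ already produces the larger root directly.
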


Now, we are in a position to obtain an estimation of $\|C_p^4\|^{1/4}.$ Let
 $ C_p^4= R+S+T, $ where $$ R=\begin{pmatrix}
d_n & d_{n-1} & ..... & d_5 & d_4 & d_3 & d_2 & d_1\\
c_n & c_{n-1} & ..... & c_5& c_4 & c_3 & c_2 & c_1\\
0 & 0 & .....& 0 & 0 & 0  & 0 & 0\\
\vdots & \vdots & \ddots & \vdots& \vdots & \vdots & \vdots & \vdots\\
0 & 0 & ....&0 &0 & 0 & 0 & 0
\end{pmatrix},$$ \\ $$ S=\begin{pmatrix}
0 & 0 & ..... & 0 & 0 & 0 & 0 & 0\\
0 & 0 & ..... & 0 & 0 & 0 & 0 & 0\\
b_n & b_{n-1} & .....& b_5 & b_4 & b_3  & b_2 & b_1 \\
-a_n & -a_{n-1} & .... & -a_5 &-a_4 & -a_3 & -a_2 & -a_1\\
0 & 0 & ...& 0& 0 & 0 & 0 & 0\\
\vdots & \vdots & \ddots & \vdots& \vdots & \vdots & \vdots & \vdots\\
0 & 0 & ....&0 &0 & 0 & 0 & 0
\end{pmatrix}$$ and $$ T=\begin{pmatrix}
0 & 0 & ..... & 0 & 0 & 0 & 0 & 0\\
0 & 0 & ..... & 0 & 0 & 0 & 0 & 0\\
0 & 0 & .....& 0 & 0 & 0  & 0 & 0 \\
0 & 0 & .... & 0 & 0 & 0 & 0 & 0\\
1 & 0 & ...& 0& 0 & 0 & 0 & 0\\
0 & 1 & ...& 0& 0 & 0 & 0 & 0\\
\vdots & \vdots & \ddots & \vdots& \vdots & \vdots & \vdots & \vdots\\
0 & 0 & ....&1 &0 & 0 & 0 & 0
\end{pmatrix}.$$
 Now,
\begin{eqnarray*}
\|C_p^4\|^2&=&\| R+S+T\|^2\\&=&\|(R+S+T)^*(R+S+T)\|\\&=&\|R^*R+S^*S+T^*T\|\,(\textit{since      $ R^*S=R^*T=S^*R=S^*T=T^*R=T^*S=0$})\\&\leq&\|R^*R+S^*S\|+\|T^*T\|\\&\leq&\frac{1}{2}\left(\|R\|^2+\|S\|^2+\sqrt{\left(\|R\|^2-\|S\|^2\right)^2+4\|RS^*\|^2}\right)+1 \,\,(\textit{using Lemma \ref{lem4}}).
\end{eqnarray*}
By simple calculations, we have
 \begin{eqnarray*}
\|R\|^2&=&\|R^*R\|=\|RR^*\|\\&=&\frac{1}{2}\left(\alpha_1+\beta_1+\sqrt{(\alpha_1-\beta_1)^2+4|\gamma_1|^2}\right)=\delta_1,
\end{eqnarray*} where $\alpha_1=\sum_{j=1}^{n}|d_j|^2$, $\beta_1=\sum_{j=1}^{n}|c_j|^2$ , $\gamma_1=\sum_{j=1}^{n}d_j\bar{c_j},$  
\begin{eqnarray*}
\|S\|^2&=&\|S^*S\|=\|SS^*\|\\&=&\frac{1}{2}\left(\alpha+\beta+\sqrt{(\alpha-\beta)^2+4|\gamma|^2}\right)=\delta,
\end{eqnarray*} where $\alpha=\sum_{j=1}^{n}|a_j|^2$, $\beta=\sum_{j=1}^{n}|b_j|^2$ , $\gamma=-\sum_{j=1}^{n}b_j\bar{a_j},$
\begin{eqnarray*}
&&\|RS^*\|^2\\&=&\frac{1}{2}\left(|\gamma_2|^2+|\gamma_3|^2+|\gamma_4|^2+|\gamma_5|^2+\sqrt{\left((|\gamma_2|^2+|\gamma_3|^2)-(|\gamma_4|^2+|\gamma_5|^2)\right)^2+4|\gamma_2\bar{\gamma_4}+\gamma_3\bar{\gamma_5}|^2}\right)\\
&=&\delta_2,
\end{eqnarray*}
where $\gamma_2=\sum_{j=1}^{n}d_j\bar{b_j}$, $\gamma_3=\sum_{j=1}^{n}d_j\bar{a_j}$, $\gamma_4=\sum_{j=1}^{n}c_j\bar{b_j}$, $\gamma_5=\sum_{j=1}^{n}c_j\bar{a_j}$.\\
Therefore, 
 \begin{eqnarray}
\|C_p^4\|\leq  \sqrt{ \frac{1}{2}\left(\delta_1+\delta+\sqrt{(\delta_1-\delta)^2+4\delta_2}\right)+1}.
\label{eq4}\end{eqnarray} 

We observe that the estimation of $\|C_p^4\|^{1/4}$ in \eqref{eq4} is incomparable with the existing estimation of $\|C_p^2\|^{1/2}$ in \eqref{ppp26}.
In the following theorem we derive an upper bound for the spectral radius of the Frobenius companion matrix $C_p$, by using the estimations in (\ref{eq5}) and (\ref{eq4}).
\begin{theorem}
	The following inequality holds:
	$$r(C_p)\leq\left\lbrace\frac{1}{4}\left(\frac{\delta+1+\sqrt{(\delta-1)^2+4\delta'}}{2}\right)+\frac{3}{4}\left(\frac{1}{2}\left(\delta_1+\delta+\sqrt{(\delta_1-\delta)^2+4\delta_2}\right)+1\right)^\frac{1}{2}\right\rbrace^\frac{1}{4},$$ where $\delta'=\frac{1}{2}\left(\alpha'+\beta'+\sqrt{(\alpha'-\beta')^2+4|\gamma'|^2}\right)$, \\$\delta=\frac{1}{2}\left(\alpha+\beta+\sqrt{(\alpha-\beta)^2+4|\gamma|^2}\right)$,\\$\delta_1=\frac{1}{2}\left(\alpha_1+\beta_1+\sqrt{(\alpha_1-\beta_1)^2+4|\gamma_1|^2}\right)$,\\$\delta_2= \frac{1}{2}\left(|\gamma_2|^2+|\gamma_3|^2+|\gamma_4|^2+|\gamma_5|^2+\sqrt{\left((|\gamma_2|^2+|\gamma_3|^2)-(|\gamma_4|^2+|\gamma_5|^2)\right)^2+4|\gamma_2\bar{\gamma_4}+\gamma_3\bar{\gamma_5}|^2}\right)$,\\ $\alpha'=\sum_{j=3}^{n}{|a_j|^2}$, $\beta'=\sum_{j=3}^{n}{|b_j|^2}$, $\gamma'=-\sum_{j=3}^{n}{\bar{a_j}b_j},$\\$\alpha=\sum_{j=1}^{n}|a_j|^2$, $\beta=\sum_{j=1}^{n}|b_j|^2$ , $\gamma=-\sum_{j=1}^{n}b_j\bar{a_j}$,\\  $\alpha_1=\sum_{j=1}^{n}|d_j|^2$, $\beta_1=\sum_{j=1}^{n}|c_j|^2$ , $\gamma_1=\sum_{j=1}^{n}d_j\bar{c_j}$,\\$\gamma_2=\sum_{j=1}^{n}d_j\bar{b_j}$, $\gamma_3=\sum_{j=1}^{n}d_j\bar{a_j}$, $\gamma_4=\sum_{j=1}^{n}c_j\bar{b_j}$, $\gamma_5=\sum_{j=1}^{n}c_j\bar{a_j}.$ 
\label{th2}\end{theorem}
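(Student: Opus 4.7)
\noindent The plan is to combine two elementary upper estimates for $r(C_p)^4$ convexly and then substitute the norm bounds (\ref{eq5}) and (\ref{eq4}) already established in the section. No new operator-theoretic machinery is needed.

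First, using the spectral-radius power identity $r(A^n)=r(A)^n$ together with the standard inequality $r(A)\le\|A\|$, I would record the two basic bounds
\[
r(C_p)^4 \;=\; r(C_p^2)^2 \;\le\; \|C_p^2\|^2, \qquad r(C_p)^4 \;=\; r(C_p^4) \;\le\; \|C_p^4\|.
\]
Both hold simultaneously, so every convex combination of the right-hand sides is again an upper bound for $r(C_p)^4$. Taking the weights $\tfrac14$ and $\tfrac34$ yields
\[
r(C_p)^4 \;\le\; \tfrac14\,\|C_p^2\|^2 \;+\; \tfrac34\,\|C_p^4\|.
\]

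Next I would plug in the two norm estimates. Squaring (\ref{eq5}) gives $\|C_p^2\|^2 \le \tfrac{\delta+1+\sqrt{(\delta-1)^2+4\delta'}}{2}$, and (\ref{eq4}) directly supplies a bound for $\|C_p^4\|$ in terms of $\delta,\delta_1,\delta_2$. Substituting both into the convex combination above and extracting the fourth root yields exactly the inequality asserted in Theorem~\ref{th2}.

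The only mildly delicate point is the choice of weights $(\tfrac14,\tfrac34)$: any convex pair would produce a valid estimate, and this particular choice is presumably selected because in the companion-matrix setting the bound on $\|C_p^4\|$ tends to be the sharper of the two and is therefore given the larger weight. There is no real technical obstacle to the proof.
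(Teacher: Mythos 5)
Your proof is correct, but it reaches the key intermediate inequality $r(C_p)^4\le \tfrac14\|C_p^2\|^2+\tfrac34\|C_p^4\|$ by a genuinely different and much more elementary route than the paper. The paper derives this inequality from operator-theoretic machinery: it applies the Abu-Omar--Kittaneh bound $w^2(A)\le\tfrac14\|A^*A+AA^*\|+\tfrac12 w(A^2)$ with $A$ replaced by $C_p^2$, uses $r(C_p)^2=r(C_p^2)\le w(C_p^2)$, and then relaxes via $\||C_p^2|^2+|(C_p^*)^2|^2\|\le\|C_p^2\|^2+\|C_p^4\|$ and $w(C_p^4)\le\|C_p^4\|$, which is precisely where the weights $(\tfrac14,\tfrac34)$ come from --- they are not a free convex choice in the paper's argument but the outcome of $\tfrac14(\|C_p^2\|^2+\|C_p^4\|)+\tfrac12\|C_p^4\|$. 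Your observation that the same weighted bound follows trivially from $r(C_p)^4\le\min\left(\|C_p^2\|^2,\,\|C_p^4\|\right)$ (since any convex combination dominates the minimum) is valid and, if anything, reveals that the stated theorem is never sharper than the better of the two bounds $\|C_p^2\|^{1/2}$ and $\|C_p^4\|^{1/4}$ --- the latter being exactly Theorem~\ref{th4}. What the paper's route buys is a sharper unrelaxed intermediate bound, namely $r(C_p)\le\left\lbrace\tfrac14\||C_p^2|^2+|(C_p^*)^2|^2\|+\tfrac12 w(C_p^4)\right\rbrace^{1/4}$, which can beat the minimum above before the final norm relaxations are made; your route cannot produce that refinement. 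As a proof of the theorem exactly as stated, however, your argument is complete once the bounds (\ref{eq5}) and (\ref{eq4}) are substituted, and the speculation about why $(\tfrac14,\tfrac34)$ is chosen, while not the paper's actual reason, does not affect correctness.
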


\begin{proof}
	Let $A\in \mathbb{B}(\mathscr{H})$. 
Putting $A=A^2$ in the inequality  $w^2(A)\leq \frac14 \|A^*A+AA^*\|+\frac12 w(A^2)$ (see \cite[Th. 2.4]{a17}),
 we get
\begin{eqnarray*}
	w^2(A^2)&\leq&\frac{1}{4} \left\||A^2|^2+|{(A^*)}^2|^2\right\|+\frac{1}{2}w(A^4).
\end{eqnarray*}
It follows that 
\begin{eqnarray*} 
	r^2(A)=r(A^2)\leq w(A^2)\leq\left\lbrace\frac{1}{4} \left \||A^2|^2+|{(A^*)}^2|^2\right\|+\frac{1}{2}w(A^4)\right\rbrace^{\frac{1}{2}},
\end{eqnarray*}
i.e.,  
\begin{eqnarray} \label{spec1}
	r(A)\leq\left\lbrace\frac{1}{4} \left\||A^2|^2+|{(A^*)}^2|^2\right\|+\frac{1}{2}w(A^4)\right\rbrace^{\frac{1}{4}}.
\end{eqnarray}	
Now, it follows from \eqref{spec1} and the inequality $\|C_p^*C_p+C_pC_p^*\| \leq \|C_p\|^2+\|C_p^2\|$ (see \cite[Remark 3.9]{a19}) that
\begin{eqnarray*}
r(C_p) &\leq&\left\lbrace\frac{1}{4} \left\||C_p^2|^2+|{(C_p^*)}^2|^2\right\|+\frac{1}{2}w(C_p^4)\right\rbrace^{\frac{1}{4}}\\
&\leq&\left\lbrace\frac{1}{4}(\|C_p^2\|^2+\|C_p^4\|)+\frac{1}{2}\|C_p^4\|\right\rbrace^{\frac{1}{4}}\\
&\leq& \left\lbrace\frac{1}{4} \left\|C_p^2\right\|^2+\frac{3}{4}\left\|C_p^4\right\|\right\rbrace^\frac{1}{4}. 
\end{eqnarray*}
Therefore, the required inequality follows by using the estimations in (\ref{eq5}) and (\ref{eq4}). 
\end{proof}

By using the fact
$|\lambda_j(C_p)|\leq r(C_p)$, where $\lambda_j(C_p)$ is the $j$-th eigenvalue of $C_p$, we infer the following estimation for the zeros of the polynomial $p(z)$.
\begin{theorem}
If $z$ is any zero of $p(z)$, then $$|z|\leq\left\lbrace\frac{1}{4}\left(\frac{\delta+1+\sqrt{(\delta-1)^2+4\delta'}}{2}\right)+\frac{3}{4}\left(\frac{1}{2}\left(\delta_1+\delta+\sqrt{(\delta_1-\delta)^2+4\delta_2}\right)+1\right)^\frac{1}{2}\right\rbrace^\frac{1}{4},$$
where $\delta$, $\delta_1$, $\delta_2$ and $\delta'$ are same as in Theorem \ref{th2}. 
\label{th3}\end{theorem}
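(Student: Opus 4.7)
The proof is essentially a one-line application of Theorem \ref{th2} combined with the eigenvalue/spectral-radius correspondence, so my plan is to spell out that reduction carefully rather than to rework any of the underlying operator-theoretic machinery.

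First I would recall the standard fact (noted in the excerpt, citing \cite[p.~316]{a14}) that the characteristic polynomial of the Frobenius companion matrix $C_p$ equals $p(z)$ itself. Consequently every root $z$ of $p$ is an eigenvalue of $C_p$. By the very definition of the spectral radius, any eigenvalue $\lambda$ of $C_p$ satisfies $|\lambda|\leq r(C_p)$, so in particular $|z|\leq r(C_p)$ for every zero $z$ of $p(z)$.

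Next I would simply invoke Theorem \ref{th2}, which gives precisely the displayed upper bound for $r(C_p)$ in terms of the quantities $\delta,\delta',\delta_1,\delta_2$ coming from the entries of $C_p,C_p^2,C_p^3,C_p^4$. Chaining the two inequalities $|z|\leq r(C_p)\leq \{\cdots\}^{1/4}$ yields exactly the claimed bound, and since $\delta,\delta',\delta_1,\delta_2$ here are defined identically to those in Theorem \ref{th2}, no recomputation of $\alpha,\beta,\gamma$, etc.\ is needed.

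There is essentially no obstacle: all the real work (the inequality \eqref{spec1} for $r(A)$, the estimates \eqref{eq5} for $\|C_p^2\|$ and \eqref{eq4} for $\|C_p^4\|$, and the computation of the various $\delta$-quantities) was already carried out in the proof of Theorem \ref{th2}. The only care required is bookkeeping, namely to check that the symbols $\delta,\delta',\delta_1,\delta_2$ in the statement of Theorem \ref{th3} are those same quantities, so that the bound transfers verbatim. The proof therefore reduces to the one-line observation $|z|\leq r(C_p)$ followed by an application of Theorem \ref{th2}.
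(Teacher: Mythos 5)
Your proposal is correct and is exactly the argument the paper uses: the paper derives Theorem \ref{th3} from Theorem \ref{th2} via the observation that every zero of $p(z)$ is an eigenvalue of $C_p$, hence bounded in modulus by $r(C_p)$. Nothing further is needed.
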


Applying the spectral mapping theorem, we conclude that if $z$ is any zero of $p(z)$ then $|z|\leq\|C_p^4\|^\frac{1}{4}$. 
Thus, by using the inequality (\ref{eq4}) we achieve  another new estimation for the zeros of $p(z)$.
\begin{theorem}
If $z$ is any zero of $p(z)$, then $$ |z|\leq\left\lbrace\frac{1}{2}\left(\delta_1+\delta+\sqrt{(\delta_1-\delta)^2+4\delta_2}\right)+1\right\rbrace^\frac{1}{8},$$ where $\delta$, $\delta_1$ and $\delta_2$ are given in Theorem \ref{th2}.
\label{th4}\end{theorem}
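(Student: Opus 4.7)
The plan is to exploit the same companion-matrix framework that was already set up in the derivation leading to \eqref{eq4}, but now pushed through the spectral mapping theorem rather than through the numerical-radius inequality used in Theorem \ref{th2}. Concretely, since the characteristic polynomial of $C_p$ is $p(z)$ itself, every zero $z$ of $p(z)$ is an eigenvalue of $C_p$, and hence $z^4$ is an eigenvalue of $C_p^4$. In particular $|z|^4 \le r(C_p^4) \le \|C_p^4\|$, so $|z| \le \|C_p^4\|^{1/4}$. This reduction, which the paragraph just before the theorem already records, shifts the whole task onto bounding $\|C_p^4\|$.

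The second step is to invoke the estimation \eqref{eq4}, which was derived earlier in the section by decomposing $C_p^4 = R + S + T$ into three blocks with pairwise orthogonal column supports (so that the cross-products $R^*S$, $R^*T$, $S^*T$, etc., vanish), then applying Lemma \ref{lem4} to the positive operators $R^*R + S^*S$ and $T^*T$, and finally computing the explicit values of $\|R\|^2 = \delta_1$, $\|S\|^2 = \delta$, $\|T\|^2 = 1$ and $\|RS^*\|^2 = \delta_2$. Substituting \eqref{eq4} into $|z| \le \|C_p^4\|^{1/4}$ immediately yields
\[
|z| \le \left\{\tfrac{1}{2}\bigl(\delta_1 + \delta + \sqrt{(\delta_1-\delta)^2 + 4\delta_2}\bigr) + 1\right\}^{1/8},
\]
which is the stated bound.

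Because all the technical work — the orthogonality of the three blocks, the explicit Frobenius-norm computations of the four scalars $\delta, \delta_1, \delta_2$, and the application of Lemma \ref{lem4} — is already carried out in the lead-up to \eqref{eq4}, the theorem itself requires essentially no new calculation. I do not anticipate any real obstacle; the only thing to be careful about is citing \eqref{eq4} in exactly the form that produces the $1/8$ exponent (the inner square root in \eqref{eq4} already accounts for one factor of $1/2$, and taking a fourth root of that bound accounts for the remaining $1/8$), and matching notation with Theorem \ref{th2} so that the reader recognizes that $\delta, \delta_1, \delta_2$ are the same quantities used there.
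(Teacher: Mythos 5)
Your proposal is correct and follows exactly the paper's own route: the spectral mapping theorem gives $|z|\leq\|C_p^4\|^{1/4}$, and substituting the bound (\ref{eq4}) on $\|C_p^4\|$ (whose outer square root combines with the fourth root to produce the exponent $1/8$) yields the stated inequality. The paper treats this as an immediate consequence of the material preceding the theorem, just as you do.
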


Again, putting $A=A^2$ in the inequality $w(A)\leq \frac{1}{2} \left( \|A\|+ \|A^2\|^{\frac12} \right )$ (see  \cite[Th. 1]{a21}),
and proceeding as \eqref{spec1}, we get
\begin{eqnarray}\label{spec2}
	r(A)&\leq&\left\lbrace\frac{1}{2
	}\|A^2\|+\frac{1}{2}\|A^4\|^\frac{1}{2}\right\rbrace^{\frac{1}{2}}.
\end{eqnarray}
Proceeding similarly  as in Theorem \ref{th2} we obtain the following estimation by using the inequalities in (\ref{spec2}), (\ref{eq5}) and (\ref{eq4}).

\begin{theorem}
If $z$ is any zero of $p(z)$, then $$ |z|\leq\left\lbrace\frac{1}{2}\sqrt{\frac{\delta+1+\sqrt{(\delta-1)^2+4\delta'}}{2}}+\frac{1}{2}\left(\frac{1}{2}\left(\delta_1+\delta+\sqrt{(\delta_1-\delta)^2+4\delta_2}\right)+1\right)^\frac{1}{4}\right\rbrace^\frac{1}{2},$$
\label{th5}
\end{theorem}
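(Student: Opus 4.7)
The plan is to follow exactly the template used for Theorems~\ref{th2}--\ref{th4}: obtain a bound on $r(C_p)$ expressed in terms of $\|C_p^2\|$ and $\|C_p^4\|$, and then substitute the two structural estimates \eqref{eq5} and \eqref{eq4}. The zeros of $p(z)$ are exactly the eigenvalues of the Frobenius companion matrix $C_p$, so for any zero $z$ we have $|z|\le r(C_p)$, and the estimate will transfer directly.

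The relevant abstract bound has already been derived in the text just before the statement, namely inequality \eqref{spec2}:
\[
r(A)\le \left\{\tfrac12\|A^2\|+\tfrac12\|A^4\|^{1/2}\right\}^{1/2}.
\]
Specializing to $A=C_p$ gives
\[
r(C_p)\le \left\{\tfrac12\|C_p^2\|+\tfrac12\|C_p^4\|^{1/2}\right\}^{1/2}.
\]
Now I apply the two known upper estimates for the norms of the powers of $C_p$: inequality \eqref{eq5} bounds $\|C_p^2\|$ by $\sqrt{(\delta+1+\sqrt{(\delta-1)^2+4\delta'})/2}$, and inequality \eqref{eq4} bounds $\|C_p^4\|$ by $\sqrt{\tfrac12(\delta_1+\delta+\sqrt{(\delta_1-\delta)^2+4\delta_2})+1}$, so $\|C_p^4\|^{1/2}$ is at most $\bigl(\tfrac12(\delta_1+\delta+\sqrt{(\delta_1-\delta)^2+4\delta_2})+1\bigr)^{1/4}$. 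Plugging both into the displayed inequality for $r(C_p)$ and invoking $|z|\le r(C_p)$ yields precisely the conclusion of the theorem.

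There is essentially no obstacle: the combinatorial definitions of $\alpha,\beta,\gamma,\delta,\delta',\delta_1,\delta_2$ in terms of the coefficients $a_j,b_j,c_j,d_j$ have already been set up, and the monotonicity of $t\mapsto t^{1/2}$ is all that is required to pass the bound on $\|C_p^4\|$ through the $(\cdot)^{1/2}$ in \eqref{spec2}. The only thing to be careful about is bookkeeping of the nested radicals and exponents so that the final expression has the outer exponent $1/2$ (from \eqref{spec2}) and the inner exponent $1/4$ (from $\|C_p^4\|^{1/2}$ combined with the square root inside \eqref{eq4}); this matches the statement exactly. Hence the proof is a one-line concatenation of \eqref{spec2}, \eqref{eq5}, \eqref{eq4}, and the spectral characterization of the zeros of $p(z)$.
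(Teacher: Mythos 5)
Your proposal is correct and coincides with the paper's own argument: the authors derive \eqref{spec2} from Kittaneh's inequality $w(A)\leq \frac{1}{2}(\|A\|+\|A^2\|^{1/2})$ and then state that Theorem \ref{th5} follows by combining \eqref{spec2} with the estimates \eqref{eq5} and \eqref{eq4} for $\|C_p^2\|$ and $\|C_p^4\|$, exactly as you do. The bookkeeping of exponents and the final appeal to $|z|\leq r(C_p)$ are also as in the paper.
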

where $\delta$, $\delta_1$, $\delta_1$ and $\delta'$ are given in Theorem \ref{th2}.

Finally, we compare the bounds obtained here for the zeros of $p(z)$ with the existing ones. First we note some well known existing bounds.
Let $z$ be any zero of $ p(z)$. Then

Linden \cite{a12} obtained that
$$|z|\leq\frac{|a_n|}{n}+\left(\frac{n-1}{n}\left(n-1+\sum_{j=1}^{n}{|a_j|^2}-\frac{|a_n|^2}{n}\right)\right)^\frac{1}{2}.$$

Montel \cite[Th. 3]{a11} obtained that
$$|z|\leq\max\left\lbrace1,|a_1|+\cdots+|a_n|\right\rbrace.$$

Cauchy \cite{a14} obtained that 
$$|z|\leq 1+\max\left\lbrace|a_1|,\cdots,|a_n|\right\rbrace.$$

Kittaneh \cite{a15} proved that
$$|z|\leq\frac{1}{2}\left(|a_n|+1+\sqrt{(|a_n|-1)^2+4\sqrt{\sum_{j=1}^{n-1}{|a_j|^2}}}\right).$$

Fujii and Kubo \cite{a13} obtained that
$$ |z|\leq \cos\frac{\pi}{n+1}+\frac{1}{2}\left(|a_n|+\sqrt{\sum_{j=1}^{n}{|a_j|^2}}\right).$$

Bhunia and Paul \cite[Th. 2.6]{a16} proved that 
$$|z|^2\leq\cos^2\frac{\pi}{n+1}+|a_{n-1}|+\frac{1}{4}\left(|a_n|+\sqrt{\alpha}\right)^2+\frac{1}{2}\sqrt{\alpha-|a_{n}|^2}+\frac{1}{2}\sqrt{\alpha},$$
where $ \alpha=\sum_{j=1}^{n}{|a_j|^2}.$

We consider a polynomial $ p(z)=z^3+z^2+\frac{1}{2}z+1$. Different upper bounds for the modulus of the zeros of this polynomial, mentioned above, are as shown in the following table.
\begin{center}
\begin{tabular}{|c|c|}
\hline
Linden \cite{a12} & 1.9492 \\
Montel\cite{a11} & 2.5 \\
Cauchy\cite{a14} & 2 \\
Kittaneh\cite{a15} & 2.0547\\
Fujii and Kubo\cite{a13} & 1.9571\\
Bhunia and Paul\cite{a16} & 1.96761\\
\hline
\end{tabular}
\end{center}
However, Theorem \ref{th3} gives $|z|\leq 1.38047091798$, Theorem \ref{th4} gives $|z|\leq 1.3798438819$ and Theorem \ref{th5} gives $|z|\leq 1.381095966$, which are better than the above mentioned bounds.

\noindent \underline{\textbf{Statements \& Declarations}}: \\
\textbf{Funding}. The authors declare that no funds, grants, or other support were received during the preparation of this manuscript.\\
\textbf{ Competing interests.} The authors have no relevant financial or non-financial interests to disclose. \\
\textbf{Data availability statements.} Data sharing not applicable to this article as no datasets were generated or analysed during the current study.\\
\textbf{Author Contributions.} All authors have contributed equally in the preparation of the manuscript. 
\bibliographystyle{amsplain}

\end{document}